\theoremstyle{plain}
\newcommand{\IR}{\mathbb{R}}
\newcommand{\IP}{\mathbb{P}}
\newcommand{\BG}{\mathscr{G}}
\newcommand{\eeq}{\mathrel{\mathop:}=}
\newcommand{\Trop}{\operatorname{Trop}}
\newcommand{\V}{\operatorname{V}}
\newcommand{\ICt}{\mathbb{C}\{\!\{t\}\!\}}
\newcommand{\op}{\operatorname}
\newcommand{\T}{\operatorname{T}}
\newtheorem{theorem}{Theorem}[section]
\newtheorem*{theorem*}{Theorem}
\newtheorem{proposition}[theorem]{Proposition}
\newtheorem*{proposition*}{Proposition}
\newtheorem*{lemma*}{Lemma}
\newtheorem{conjecture}[theorem]{Conjecture}
\newtheorem{lemma}[theorem]{Lemma}
\newtheorem{question}{Question}
\newtheorem{introteo}{Theorem}
\theoremstyle{definition}
\newtheorem{definition}[theorem]{Definition}
\newtheorem{example}[theorem]{Example}
\theoremstyle{remark}
\newtheorem{remark}[theorem]{Remark}
\title{Tropical Chow Hypersurfaces}
\author{Paolo Tripoli}
\address{Mathematics Institute, University of Warwick, Coventry CV4 7AL, United Kingdom.}
\email{P.Tripoli@warwick.ac.uk}
\begin{document}

\begin{abstract}
	Given a projective variety $X\subset\IP^n$ of codimension $k+1$ the Chow hypersurface $Z_X$ is the hypersurface of the Grassmannian $\op{Gr}(k, n)$ parametrizing projective linear spaces that intersect $X$.
	We introduce the tropical Chow hypersurface $\Trop(Z_X)$. This object only depends on the tropical variety $\Trop(X)$ and we provide an explicit way to obtain $\Trop(Z_X)$ from $\Trop(X)$. We also give a geometric description of $\Trop(Z_X)$.
	We conjecture that, as in the classical case, $\Trop(X)$ can be reconstructed from $\Trop(Z_X)$ and prove it for the case when $X$ is a curve in $\IP^3$. This suggests that tropical Chow hypersurfaces could be the key to construct a tropical Chow variety.
\end{abstract}

\maketitle

\section{Introduction}

Given a variety $X\subset \IP^{n}$ of codimension $k+1$ its associated Chow hypersurface is the hypersurface of the Grassmannian $\op{Gr}(k,n)$ of $k$-dimensional linear subspaces of $\IP^n$ defined by:
\begin{equation*}
Z_X \eeq \{L\in  \op{Gr}(k,n) \mid L\cap X \neq \emptyset      \}.
\end{equation*}
Chow hypersurfaces were first introduced by Cayley in \cite{Cayleychowform} for curves in $\IP^3$ and then generalized by Chow and Van der Waerden in \cite{Chowforms}. The main feature of the Chow hypersurface is that it is the vanishing locus of a single polynomial equation, called \emph{Chow form}, which uniquely determines the original variety $X$. In this paper we introduce a \emph{tropical Chow hypersurface} and we study some of its properties.

The Chow polytope $\op{P}_{\op{ch}_X}\subset \IR^{n}$ is the weight polytope associated to the Chow form $\op{ch}_X$. Kapranov,  Sturmfels and Zelevinsky studied in \cite{KapStuZel} the relation between the Chow polytope and the initial degenerations of $X$.

The codimension-one skeleton of the normal fan of $\op{P}_{\op{ch}_X}$ was studied by Fink in \cite{Fink} as a natural candidate for a tropical Chow hypersurface. However, unlike in the classical case, this tropical hypersurface does not determine the original variety. 

In this paper we present a different approach to tropicalize the Chow hypersurface.
We define, by analogy with the classical case, the tropical Chow hypersurface in the tropical Grassmannian $\op{TrGr}(k,n)$:
\begin{equation*}
Z_{\Trop(X)} \eeq \{\Lambda\in  \op{TrGr}(k,n) \mid \Lambda\cap \Trop(X) \neq \emptyset      \}.
\end{equation*}
If we embed $\op{Gr}(k, n)$ in the projective space $\IP^{\binom{n+1}{k+1}-1}$ via the Pl\"ucker embedding we can also construct the tropical variety $\Trop(Z_X)$. 

\begin{introteo}
We have the following equality of sets: 
\begin{equation*}
\Trop(Z_X)=Z_{\Trop{X}}.
\end{equation*}
\end{introteo}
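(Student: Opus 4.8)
The plan is to prove the two inclusions separately, using the fundamental theorem of tropical geometry (Bieri--Groves / Kapranov) to translate the classical statement ``$L \cap X \neq \emptyset$'' into tropical language. The key bridge is the \emph{tropical Stable Intersection} theorem together with the fact that the incidence variety $I = \{(L,x) \in \op{Gr}(k,n) \times X \mid x \in L\}$ tropicalizes well. First I would fix a valued field extension, say $\ICt$, and recall that a point $\Lambda \in \op{TrGr}(k,n)$ lifts to an honest linear space $L$ over $\ICt$ (so $\Trop(L) = \Lambda$ is a tropical linear space), and similarly any point of $\Trop(X)$ lifts to a point of $X(\ICt)$ after base change.

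For the inclusion $Z_{\Trop(X)} \subseteq \Trop(Z_X)$: suppose $\Lambda \cap \Trop(X) \neq \emptyset$, and pick $w$ in the intersection. The idea is to lift: choose a linear space $L$ over $\ICt$ with $\Trop(L) = \Lambda$ and a point $x \in X(\ICt)$ with $\val(x) = w$ lying in the corresponding cell; then one wants to perturb $L$ inside its Plücker class so that it actually passes through $x$ without changing $\Trop(L)$. This is exactly where the main obstacle lies --- it is \emph{not} true that an arbitrary tropical linear space and an arbitrary point of $\Trop(X)$ meeting tropically can be lifted to an incident pair with the prescribed tropicalizations; one must use the genericity of the chosen point $w$ (take it in the relative interior of a maximal cell of the polyhedral complex $\Lambda \cap \Trop(X)$, which has expected dimension $0$ by the codimension hypothesis) and argue via the stable intersection that a transverse lift exists. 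Concretely I would invoke that $\Trop(I) \to \op{TrGr}(k,n)$ surjects onto the locus of $\Lambda$ meeting $\Trop(X)$, using that $\Trop$ commutes with the projection $I \to \op{Gr}(k,n)$ on the level of the image since this projection is a proper (in fact projective, as $Z_X$ is its image) morphism --- and properness makes tropicalization commute with taking images of the coordinate-wise valuation map. Then $\Lambda \in \Trop(Z_X)$.

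For the reverse inclusion $\Trop(Z_X) \subseteq Z_{\Trop(X)}$: take $\Lambda \in \Trop(Z_X)$. By the fundamental theorem there is $L \in Z_X(\ICt)$ with $\Trop(L) = \Lambda$ in the Plücker coordinates, i.e.\ an honest linear space defined over the valued field meeting $X$ at some point $x \in X(\ICt)$. Applying $\val$ coordinatewise, $\val(x) \in \Trop(X)$, and the containment $x \in L$ forces $\val(x)$ to lie in the tropical linear space $\Trop(L) = \Lambda$ (tropicalization of an inclusion of varieties is an inclusion of tropicalizations). Hence $\Lambda \cap \Trop(X) \neq \emptyset$, so $\Lambda \in Z_{\Trop(X)}$. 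This direction is the easy one and essentially formal.

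I expect the hard part to be the first inclusion, specifically the lifting/perturbation step: showing that a tropical point of intersection genuinely comes from a classical incident pair with the right valuations. The cleanest route is probably to avoid explicit lifting altogether and instead argue entirely on the level of tropical varieties: show $\Trop(Z_X) = \Trop(\pi(I))$ where $\pi \colon \op{Gr}(k,n)\times\IP^n \to \op{Gr}(k,n)$, use that $\pi$ is proper to get $\Trop(\pi(I)) = \pi_{\Trop}(\Trop(I))$, and then identify $\pi_{\Trop}(\Trop(I))$ with $\{\Lambda : \Lambda \cap \Trop(X) \neq \emptyset\}$ by describing $\Trop(I)$ explicitly as the tropicalization of the universal incidence correspondence (whose fibers over $\op{TrGr}(k,n)$ are precisely the tropical linear spaces $\Lambda$ themselves). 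The remaining subtlety is that ``$\Lambda \cap \Trop(X) \neq \emptyset$'' should be read with the correct notion of tropical linear space --- the one cut out by the tropical Plücker relations / the Stiefel tropical linear space --- so that the fiber of $\Trop(I)$ over $\Lambda$ really is $\Lambda$; I would make this compatibility explicit as the first step, then the two inclusions above go through as described.
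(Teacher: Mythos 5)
Your overall plan (two inclusions, Fundamental Theorem to lift points) matches the paper's, and the ``easy'' inclusion is essentially correct, but the hard inclusion is left with a genuine gap, and you set off from a false premise in attacking it. You assert that ``it is \emph{not} true that an arbitrary tropical linear space and an arbitrary point of $\Trop(X)$ meeting tropically can be lifted to an incident pair with the prescribed tropicalizations,'' and you therefore reach for stable intersections, genericity of the intersection point, and properness of the incidence correspondence. In fact the direct lifting statement holds with no genericity hypothesis, provided you quantify in the right order: first lift the intersection point, then choose the linear space through it. This is the paper's Lemma~\ref{lem:trchset}\eqref{lp:3}, proved by an elementary translation trick: if $\Lambda=\Trop(L)$ and $x\in\T^n$ has $\op{val}(x)\in\Lambda$, the Fundamental Theorem gives $y\in L$ with $\op{val}(y)=\op{val}(x)$, and then $L'=(x\star y^{-1})\star L$ is again a linear space (coordinatewise scaling is a linear automorphism of $\IP^n$), contains $x$, and has $\Trop(L')=\Trop(L)=\Lambda$ and the same Plücker valuation, because $\op{val}(x\star y^{-1})=0$. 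With this, the inclusion $Z_{\Trop(X)}\subseteq\Trop(Z_X)$ is immediate: lift a $\Gamma_{\op{val}}$-rational $p\in\Lambda_q\cap\Trop(X)$ to $x\in X^\circ$, produce $L\ni x$ with Plücker valuation $q$, and conclude $q\in\Trop(Z_X)$; then pass to all points by density of $\Gamma_{\op{val}}$-rational points and closedness of both sides. Your fallback route through the incidence correspondence reduces to exactly the same compatibility (``the fiber of $\Trop(I)$ over $q$ is $\Lambda_q$''), which you defer rather than prove, so neither of your two routes is actually completed. Note also that ``properness makes tropicalization commute with images'' is not the right tool; what is available is that a coordinate projection of tori is a monomial map, so tropicalization of the \emph{closure} of the image is the image of the tropicalization --- and even that does not identify the fibers of $\Trop(I)$, which is where the content sits.

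Two smaller issues. In the reverse inclusion you lift $q\in\Trop(Z_X)$ to some $L\in Z_X$ and take $x\in L\cap X$; but that $x$ may lie outside the torus, in which case $\op{val}(x)$ is not a point of $\Trop(X)=\Trop(X^\circ)$. You must lift into the dense open subset $Z_{X^\circ}$ of $Z_X$ (the Fundamental Theorem allows choosing the lift in any nonempty Zariski open subset), so that $L$ meets $X^\circ$. Finally, there is no need to fix the field to be $\ICt$; the argument works verbatim over any algebraically closed field with nontrivial valuation, which is the paper's standing setup.
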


Our main result concerns the structure of $\Trop(Z_X)$. We define $\varphi$ to be the linear map $\varphi: \IR^{n+1}/\IR\rightarrow \IR^N/\IR$ defined by $\varphi (a_0,\ldots, a_{n})=(\sum_{i\in I}a_i)_I$. Let $\psi$ be the toric morphism associated to $\varphi$. This is the monomial morphism defined by $\psi (x_0,\ldots, x_{n})=(\prod_{i\in I}x_i)_I$. We denote by $\star$ the Hadamard product and by $+$ the Minkowski sum (for their definitions see Section~\ref{sec:tropch}). We also denote by $\BG_u$ the variety $\BG_u \eeq \{L\in \op{Gr}(k,n) | [1:\ldots:1]\in L \}$ and by $\Gamma_0$ the tropical variety $\Gamma_0 \eeq \{\Lambda\in \op{TrGr}(k,n) | (0,\ldots,0)\in \Lambda \}$.

\begin{introteo}
Let $X\subset \IP^n$ be an algebraic variety that intersects the torus $\T^n$. Then we have the following equalities: 
\begin{align*}
Z_{X}=\psi(X)\star\BG_u,\\
\Trop(Z_X)=\varphi(X) + \Gamma_0.
\end{align*}
\end{introteo}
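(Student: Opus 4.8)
The plan is to prove the classical identity $Z_X=\psi(X)\star\BG_u$ first, and then obtain the tropical one by tropicalizing it.

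\emph{The classical identity.} The torus $\T^n$ acts on $\IP^n$, and hence on $\op{Gr}(k,n)$ in its Pl\"ucker embedding: the automorphism $e_i\mapsto t_i e_i$ of $\IC^{n+1}$ multiplies the basis vector $e_I$ of $\bigwedge^{k+1}\IC^{n+1}$ by $\prod_{i\in I}t_i$, so $t=(t_0,\ldots,t_n)$ sends a Pl\"ucker vector $p=(p_I)_I$ to $\psi(t)\star p$. Fix $x\in X\cap\T^n$, choose a representative $(x_0,\ldots,x_n)$, and write $x=t\cdot[1:\cdots:1]$ with $t_i=x_i$. Then $x\in L$ if and only if $[1:\cdots:1]\in t^{-1}\cdot L$, i.e. $t^{-1}\cdot L\in\BG_u$; acting back by $t$ gives
\begin{equation*}
\{L\in\op{Gr}(k,n)\mid x\in L\}=\psi(x)\star\BG_u .
\end{equation*}
Since $\{L\mid x\in L\}$ is irreducible and its locus of Pl\"ucker vectors without a vanishing coordinate is dense and open, this identity also holds at the level of varieties.

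Taking the union over $x\in X\cap\T^n$ gives $\psi(X)\star\BG_u=\overline{\{L\mid L\cap X\cap\T^n\neq\emptyset\}}\subseteq Z_X$. For the reverse inclusion it is enough to note that $\{L\mid L\cap X\cap\T^n\neq\emptyset\}$ is dense in $Z_X$: its complement inside $Z_X$ is contained in $Z_{X\setminus\T^n}$, and since $X$ is irreducible and meets $\T^n$ we have $\dim(X\setminus\T^n)\le\dim X-1$, so $Z_{X\setminus\T^n}$ has codimension at least $2$ in $\op{Gr}(k,n)$ and is a proper closed subset of the hypersurface $Z_X$. Hence $Z_X=\psi(X)\star\BG_u$.

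\emph{The tropical identity.} Tropicalize the equality just proved. Tropicalization turns Hadamard products into Minkowski sums, so $\Trop(Z_X)=\Trop(\psi(X))+\Trop(\BG_u)$. As $\psi$ is the toric morphism associated with $\varphi$, we get $\Trop(\psi(X))=\varphi(\Trop(X))$, a closed set since $\varphi$ is linear and $\Trop(X)$ is a polyhedral complex. Finally $\BG_u=Z_{\{[1:\cdots:1]\}}$ is the Chow hypersurface of the torus point $[1:\cdots:1]$, whose tropicalization is the origin, so by the first Theorem of the Introduction $\Trop(\BG_u)=Z_{\{0\}}=\Gamma_0$. Putting these together yields $\Trop(Z_X)=\varphi(\Trop(X))+\Gamma_0$.

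The delicate point is the boundary bookkeeping in the classical step. Because $\psi$ is only a rational map and $\BG_u$ is not contained in the torus of the Pl\"ucker projective space, one must take some care in matching the set $\{L\mid L\cap X\cap\T^n\neq\emptyset\}$, its Zariski closure, and the Hadamard product $\psi(X)\star\BG_u$ understood as a variety; this is where the conventions for Hadamard products from Section~\ref{sec:tropch} enter. Once the classical identity is in place, the tropical one follows formally.
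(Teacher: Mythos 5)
Your argument follows the paper's route almost exactly. For the classical identity you use the torus-equivariance of the Pl\"ucker embedding to reduce $\{L \mid x\in L\}=\mathscr{G}_x$ to $\psi(x)\star\mathscr{G}_u$ and then pass to closures; the paper does the same via an explicit matrix computation showing $m_y(\mathscr{G}_x)\subset\mathscr{G}_{x\star y}$. For the density of $\{L\mid L\cap X^\circ\neq\emptyset\}$ in $Z_X$ you use a codimension count on $Z_{X\setminus\T^n}$, whereas the paper simply notes that $Z_{X^\circ}$ is a non-empty open subset of the irreducible hypersurface $Z_X$; both work. The tropical identity is then obtained, as in the paper, by tropicalizing the classical one using that $\psi$ is the monomial map with $\Trop(\psi)=\varphi$.

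There is one genuine gap, in the sentence ``Tropicalization turns Hadamard products into Minkowski sums.'' The theorem is an equality of \emph{tropical varieties}, i.e.\ of weighted balanced complexes, and the identity $\Trop(Y\star W)=\Trop(Y)+\Trop(W)$ holds with the correct multiplicities only under the hypothesis that the product map $Y\times W\to Y\star W$ is generically one-to-one (this is the hypothesis of \cite[Theorem 5.5.11]{Mac}, i.e.\ of Equation~\eqref{hadaminko}); otherwise the weights on the right are off by the degree of that map. So you must verify that $\psi(X^{\circ})\times \mathscr{G}^{\circ}_u \rightarrow \psi(X^{\circ})\star \mathscr{G}^{\circ}_u$ is generically one-to-one. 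The paper isolates this as Remark~\ref{os:fibrahada}: it is equivalent to the classical fact that a generic linear space $L_z$ with $z\in Z_X$ meets $X$ in a unique point. With that observation added, your proof is complete; as an equality of underlying sets your argument already suffices.
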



We prove that the map that sends $\Trop(X)$ to $\Trop(Z_X)$ is injective in the case that $X$ is a curve in $\IP^3$. We conjecture that this holds for any variety $X$. In particular, the argument used in \cite{Fink} to show the non-injectivity of their construction does not work here. 

In the last section we describe an application to the study of tropicalization of families of varieties.

\subsection*{Notation}

We  set throughout the paper $N=\binom{n+1}{k+1}$. The Grassmannian $\op{Gr}(k, n)$ parametrizes $k$-dimensional projective linear spaces in $\IP^n$, and it is naturally embedded in $\IP^{N-1}$  via the Pl\"ucker embedding. The coordinates of $\IP^{N-1}$ are indexed by the collection of all subsets of cardinality $k+1$ of $[n+1]=\{0, \ldots,n\}$, we denote this collection by $\binom{[n+1]}{k+1}$.

We denote by $\T^{n}$ the embedded torus $\T^{n}\eeq \{[x_0:\ldots:x_n] \mid x_0\cdot \ldots\cdot x_n\neq 0\}\subset \IP^n$. Moreover, given an algebraic variety $X\subset\IP^n$ we use the notation $X^\circ=X\cap \T^n$. 

We write $\IR^{n+1}/\IR$ for the quotient of $\IR^{n+1}$ by the one-dimensional linear space generated by the vector $(1, \ldots, 1)$. Given a variety $X\subset \IP^n$ we will denote by $\Trop(X)\subset \IR^{n+1}/\IR$ the tropicalization of $X^\circ$. By a slight abuse of notation, we also refer to $\Trop(X)$ as the tropicalization of $X$.

We work over an algebraically closed field $K$ with a non trivial valuation $\op{val}:K\setminus\{0\}\rightarrow \IR$. We denote by $\Gamma_{\op{val}} \subset \IR$ the image of $\op{val}$ and we say that a point $p\in \IR^{n+1}/\IR$ is $\Gamma_{\op{val}}$-rational if it has a representative in $ \Gamma_{\op{val}}^{n+1}$.

\subsection*{Acknowledgments}  The author thanks Diane Maclagan for helpful suggestions and close reading, Sara Lamboglia for proofreading and Bernd Sturmfels for encouraging this project. 
The author was supported by EPSRC grant EP/L505110/1.

\section{Tropical Chow Hypersurfaces}\label{sec:tropch}

In this section we begin by describing the structure of the Chow hypersurface $Z_X$ and then define a tropical Chow hypersurface.

We denote by $L_z$ the linear space corresponding to a point $z\in \op{Gr}(k,n)$. Given a variety $X\subset \mathbb{P}^{n}$ of dimension $n-k-1$ the Chow hypersurface associated to $X$ is
\begin{equation*}
Z_X\eeq \{z\in \op{Gr}(k, n) \mid X\cap L_z \neq \emptyset\}\subset \op{Gr}(k, n).
\end{equation*}
For a complete exposition of the main properties of $Z_X$ see \cite{GelKapZel}. For a quick and more concrete introduction we suggest \cite{DalStur}. If $X$ is irreducible of degree $d$, then $Z_X$ is an irreducible hypersurface of the Grassmannian $\op{Gr}(k, n)$ of degree $d$ (see, for example, {\cite[Chapter 2, Proposition 2.2]{GelKapZel}}).

The torus $\T^{n}$ naturally embeds in $\T^{N-1}$ via the following regular map:
\begin{equation}\label{defpsi}
\begin{array}{cccc}
\psi: & \T^{n} & \longrightarrow &\T^{N-1}  \\
	 &[x_0:\ldots:x_{n}]& \longmapsto & [\prod_{i\in I}x_i]_I. \\
 \end{array}
\end{equation}
For $x\in \mathbb{P}^{n}$ we denote by $\mathscr{G}_x$ the subvariety of the Grassmannian $\op{Gr}(k,n)$ defined by 
$$\mathscr{G}_x\eeq\{z\in \op{Gr}(k, n) \mid x\in L_z\}.$$

Given two quasi-projective varieties $X,Y\subset \mathbb{P}^n$, their \emph{Hadamard product} $X\star Y\subset \mathbb{P}^n$ is defined to be the closure in the Zariski topology of the set
\begin{equation*}
\{[x_0y_0, \ldots, x_{n}y_{n}] \mid [x_0, \ldots, x_{n}]\in X \text { and } [y_0, \ldots, y_{n}]\in Y     \}.
\end{equation*}
Given two points $x=[x_0, \ldots, x_{n}]$ and $y=[y_0, \ldots, y_{n}]$ in $\mathbb{P}^n$, whenever it is well defined, we also denote by $x\star y \in \mathbb{P}^n$ the point with coordinates $[x_0y_0, \ldots, x_{n}y_{n}]$. For a variety $X\subset \mathbb{P}^{n}$ we consider the set of linear spaces intersecting $X$ in the torus $\T^{n}\subset \mathbb{P}^{n}$. We define $Z_{X^\circ}\eeq \{z \in \op{Gr}(k, n) \mid  X^\circ\cap L_z \neq \emptyset\}$.

\begin{lemma}\label{hadalemma}
Let $X\subset \mathbb{P}^{n}$ be an irreducible variety of dimension $n-k-1$, and let $u=[1:\ldots :1]\in \T^{n}$. 
Then $\overline{Z}_{X^\circ}=\psi(X^\circ)\star\mathscr{G}_u$. In particular, if $X$ intersects the torus $\T^{n}$, then $Z_X=\psi(X^\circ)\star\mathscr{G}_u$.
\end{lemma}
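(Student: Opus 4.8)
The plan is to establish the identity $\overline{Z}_{X^\circ} = \psi(X^\circ) \star \mathscr{G}_u$ by showing that a point $z \in \op{Gr}(k,n)$ lies in $Z_{X^\circ}$ precisely when its Pl\"ucker coordinates arise as a Hadamard product $\psi(p) \star q$ for some $p \in X^\circ$ and some $q \in \mathscr{G}_u$. The starting point is the classical coordinate description: if $x = [x_0 : \cdots : x_n] \in \T^n$, then $L_z \ni x$ for $z$ ranging over a linear subspace of $\IP^{N-1}$ whose Pl\"ucker coordinates are linear in the $x_i$. More precisely, I would use the standard fact that $x \in L_z$ if and only if a certain set of linear relations (the "Laplace" or "exchange" relations) holds among the $p_I(z)$ and the $x_i$; equivalently, $\mathscr{G}_x$ is cut out by these linear equations. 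The key algebraic observation to isolate is that $\mathscr{G}_x = (\psi(x))^{-1} \star \mathscr{G}_u$ as subvarieties of $\op{Gr}(k,n)$, where $(\psi(x))^{-1}$ denotes the coordinatewise inverse (well-defined on $\T^{N-1}$ since $x \in \T^n$): rescaling the ambient coordinate $y_i \mapsto x_i y_i$ on $\IP^n$ transforms the condition "$x \in L_z$" into "$u \in L_{z'}$" where the Pl\"ucker coordinates transform by the monomial map $\psi$.

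With that in hand, the argument is a set-theoretic chain. First I would note $Z_{X^\circ} = \bigcup_{x \in X^\circ} \mathscr{G}_x$, since $X^\circ \cap L_z \neq \emptyset$ means $x \in L_z$ for some $x \in X^\circ$. Then, using $\mathscr{G}_x = \psi(x) \star \mathscr{G}_u$ (absorbing the inverse into the orbit of $\psi(X^\circ)$ under $\star$, or more directly observing that $x \in L_z \iff \psi(x) \star u' = z$-type relations hold — I will formulate this so the inverse does not actually appear, by checking the Laplace relations are invariant under the simultaneous scaling), we get $Z_{X^\circ} = \bigcup_{x \in X^\circ} \psi(x) \star \mathscr{G}_u = \{\psi(x) \star q \mid x \in X^\circ,\ q \in \mathscr{G}_u\}$. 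Taking Zariski closures of both sides and using that the Hadamard product $\psi(X^\circ) \star \mathscr{G}_u$ is by definition the closure of exactly this set yields $\overline{Z}_{X^\circ} = \psi(X^\circ) \star \mathscr{G}_u$.

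For the final sentence, when $X$ meets $\T^n$ — so $X^\circ$ is dense in $X$ (here irreducibility of $X$ is used) — I would argue that $Z_X = \overline{Z_{X^\circ}}$: the containment $Z_{X^\circ} \subseteq Z_X$ is clear, and conversely $Z_X$ is irreducible of dimension $N-2$ (it is a hypersurface in the Grassmannian), while $Z_{X^\circ}$ is nonempty and its closure is a subvariety of $Z_X$ of the same dimension, hence equal. Alternatively one invokes semicontinuity: the locus of $z$ with $\overline{X^\circ} \cap L_z \neq \emptyset$ and the locus with $X^\circ \cap L_z \neq \emptyset$ have the same closure because $X \setminus X^\circ$ is a proper closed subvariety and the incidence correspondence over $X^\circ$ is dense in the one over $X$.

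The main obstacle I anticipate is the bookkeeping in the first paragraph: pinning down the precise linear equations defining $\mathscr{G}_x$ in Pl\"ucker coordinates and verifying that the rescaling $x_i \mapsto 1$ intertwines them with the monomial map $\psi$ in a way that is literally the Hadamard product with $\mathscr{G}_u$ — in particular handling the direction of the scaling (whether $\psi(x)$ or $\psi(x)^{-1}$ appears) and confirming that on the torus this ambiguity is harmless since $\mathscr{G}_u \star \T^{N-1}$-translates behave well. Everything after that is formal manipulation of unions and closures.
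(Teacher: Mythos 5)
Your proposal is correct and follows essentially the same route as the paper: the paper's proof is built around the identity $\mathscr{G}_{x\star y}=m_y(\mathscr{G}_x)$, where $m_y$ is the Hadamard product with $\psi(y)$, verified by writing $L_z$ as the rowspan of a matrix and scaling its columns, followed by exactly your union-and-closure argument and by the observation that $Z_{X^\circ}$ is a nonempty open (hence dense) subset of the irreducible hypersurface $Z_X$. The one piece of bookkeeping you left open resolves with no inverse, namely $\mathscr{G}_x=\psi(x)\star\mathscr{G}_u$: if $u$ lies in the rowspan of a matrix $A$ representing $L_z$, then $x$ lies in the rowspan of the matrix obtained by scaling the $i$-th column of $A$ by $x_i$, and the Pl\"ucker vector of that scaled matrix is $\psi(x)\star z$.
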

\begin{proof}

There is a natural action of the torus $\T^n$ on the torus $\T^{N-1}$. For $y\in \T^{n}$ this action is defined by the multiplication map
\begin{equation*}
\begin{array}{cccc}
	m_y: & \T^{N-1} & \longrightarrow &\T^{N-1}  \\
			 &[p_I]_I& \longmapsto & [p_I\prod_{i\in I}y_i]_I. \\
 \end{array}
\end{equation*}
The multiplication map $m_y$ is the Hadamard product with $\psi(y)$. It can be extended to an automorphism of $\mathbb{P}^{N-1}$ with inverse $m_{y'}$, for $y'=[y_0^{-1}: \ldots: y_{n}^{-1}]$. Moreover the Pl\"ucker ideal is homogeneous with respect to the grading of $K[p_I\ \mid \ I\in \binom{[n+1]}{k+1}]$ associated to this torus action, as a consequence $m_y$ preserves the Grassmannian. 

We claim that, for $x\in \mathbb{P}^{n}$, if we restrict $m_y$ to $\mathscr{G}_x$ we get an isomorphism
\begin{equation}\label{multisoEq}
	\begin{array}{cccc}
		m_y|_{\mathscr{G}_x}: & \mathscr{G}_x & \stackrel{\sim}{\longrightarrow} &\mathscr{G}_{x\star y}.
	\end{array}
\end{equation}

To prove this we just need to show that $m_y(\mathscr{G}_x)\subset \mathscr{G}_{x\star y}$, as the claim will then follow from the analogous statement for $m_{y'}$.
Fix a point $p\in \op{Gr}(k, n)$. The coordinates $p_I$ of $p$ arise as the determinants of the maximal minors of some $k\times n$ matrix 
\begin{equation}\label{matA}
	A_p=\left( 
		\begin{array}{ccc}
			a_{0,0} & \ldots & a_{0,n} \\
			\vdots & \ddots & \vdots \\
			a_{k,0} & \ldots & a_{k,n} 
		\end{array} 
	\right)
\end{equation}
whose rows are a basis for the linear space $L_p$ corresponding to $p$.
The minor indexed by $I$ of the matrix
\begin{equation}\label{matApsi}
	A_{p\star\psi(y)}=\left( 
		\begin{array}{ccc}
			a_{0,0}y_0 & \ldots & a_{0,n}y_{n} \\
			\vdots & \ddots & \vdots \\
			a_{k,0}y_0 & \ldots & a_{k,n}y_{n} 
		\end{array} 
	\right)
\end{equation}
has determinant $p_I\prod_{i\in I} y_i = (m_y(p))_I$ which shows that the linear space corresponding to $m_y(p)\in \op{Gr}(k, n)$ is the rowspace of $A_{p\star\psi(y)}$.
By assumption $x$ lies in the linear span of the rows of $A_p$, so there exists some $m\in K^{k+1}$ such that $\sum_j m_j a_{j,i}=x_i$ for $i=0, \ldots, n$. The entries $a'_{i,j}=a_{i,j}y_i$ of $A_{p\star\psi(y)}$ satisfy $\sum_j m_j a'_{j,i}=\sum_j m_j a_{j,i}y_i=x_iy_i=(x\star y)_i$ for $i=1, \ldots, n$ proving the claim. 
 
We can now conclude the proof of the first part of the statement. A point $z\in  \op{Gr}(k, n)$ is in $Z_{X^\circ}$ if and only if there is a point $x\in X^{\circ}$ such that $x\in L_z$. The latter condition is, by definition, equivalent to $z\in \mathscr{G}_x$ so that, by Equation \eqref{multisoEq}, we can find a $y \in \mathscr{G}_u$ with $z= m_x(y)$. As $m_x(y) = \psi (x)\star y$ we have that
\begin{equation*}
\overline{Z}_{X^\circ}=\overline{\{\psi(x)\star y \in \op{Gr}(k, n) \mid x\in X^\circ \text{ and } y\in \mathscr{G}_u\}}=\psi(X^\circ)\star \mathscr{G}_u.
\end{equation*}
We now prove the last statement. We have that $Z_X$ is irreducible as $X$ is (see \cite[Chapter 3, Proposition 2.2]{GelKapZel}). Moreover $Z_{X^\circ}$ is an open subset of $Z_X$, as its complement $Z_X\setminus Z_{X^\circ}$ is the Zariski closed set $\{L \mid L\cap X\cap\V(x_0\cdot\ldots\cdot x_{n})\neq \emptyset\}$. Finally  $Z_{X^\circ}$ is not empty as $X^\circ$ is not, so $Z_X=\overline {Z}_{X^\circ}$ concluding the proof.
\end{proof}

\begin{remark}\label{os:fibrahada}
It is useful to notice that the morphism $\psi(X^{\circ})\times \mathscr{G}^{\circ}_u \rightarrow \psi(X^{\circ})\star \mathscr{G}^{\circ}_u$ defined by $(x,y)\mapsto x\star y$ is generically one-to-one. This is equivalent to the well-known fact that a generic linear space $L_z$ with $z\in Z_X$ intersects $X$ in a unique point.
\end{remark}

We now establish a tropical version of Lemma~\ref{hadalemma}. A \emph{tropical variety} $\Sigma\subset\mathbb{R}^{n+1}/\mathbb{R}$  of dimension $n-k-1$ is a balanced weighted $\op{\Gamma_{\op{val}}}$-rational polyhedral complex of pure dimension $n-k-1$. A detailed introduction to this notion can be found in \cite[Chapter 3]{Mac}. There, however, the name tropical variety is restricted to polyhedral complexes that arise as $\Trop(X)$ for some $X\subset \T^n$.

We recall the notion of \emph{Minkowski sum} for polyhedral complexes.
Given two polyhedra $\sigma_1$, $\sigma_2$ of dimension $d_1$ and $d_2$ and weight $m_{\sigma_1}$ and $m_{\sigma_2}$, their Minkowski sum is defined, as a set, to be 
\begin{equation*}
\sigma_1 + \sigma_2=\{a+b \mid a\in \sigma_1, \ b\in \sigma_2      \}   .
\end{equation*}

The weight $m_{\sigma_1 + \sigma_2}$ of $\sigma_1 + \sigma_2$ is defined to be 
\begin{equation} \label{eq:minkw}
m_{\sigma_1 + \sigma_2}=\left\{
\begin{array}{ll}
0&\text{if }\dim (\sigma_1 + \sigma_2) \neq d_1+d_2\\
m_{\sigma_1} m_{\sigma_2}[N_{\sigma_1 + \sigma_2}:N_{\sigma_1} + N_{\sigma_2}]&\text{if }\dim (\sigma_1 + \sigma_2) = d_1+d_2
\end{array}\right.
\end{equation}
where $N_{\sigma}$ denotes, for a cone $\sigma\subset \mathbb{R}^{n+1}/\mathbb{R}$, the lattice generated by the integer points of $\sigma$.

The Minkowski sum $\Sigma_1+\Sigma_2$ of two polyhedral complexes $\Sigma_1,\ \Sigma_2\subset \mathbb{R}^{n+1}/\mathbb{R}$ of pure dimension is defined as a set to be the Minkowsi sum of the underlying sets of $\Sigma_1$ and $\Sigma_2$. The set $\Sigma_1+\Sigma_2$ is actually a polyhedral complex, and we can give it a polyhedral complex structure so that, for any $\sigma_1\in \Sigma_1$ and $\sigma_2\in \Sigma_2$, the polyhedron $\sigma_1 + \sigma_2$ is a union of polyhedra of $\Sigma_1+\Sigma_2$. If $\Sigma_1$ and $\Sigma_2$ are weighted, then we define the multiplicity of a polyhedron $\sigma\in \Sigma_1+\Sigma_2$ to be 
\begin{equation*}
m_{\sigma}=\sum_{\sigma_1+\sigma_2\supset \sigma} m_{\sigma_1+\sigma_2}.
\end{equation*}

Equivalently $\Sigma_1+\Sigma_2$ is the image of $\Sigma_1\times\Sigma_2\subset \mathbb{R}^{n+1}/\mathbb{R}\times \mathbb{R}^{n+1}/\mathbb{R}$ under the map 
\begin{equation*}
\begin{array}{cccc}
\alpha: & \mathbb{R}^{n+1}/\mathbb{R}\times \mathbb{R}^{n+1}/\mathbb{R} & \longrightarrow &\mathbb{R}^{n+1}/\mathbb{R} \\
 & (a,b) & \longmapsto & a+b,
 \end{array}
\end{equation*}
where the polyhedron $\sigma_1\times\sigma_2\in\Sigma_1\times\Sigma_2$ has weight $m_{\sigma_1}m_{\sigma_2}$.

The Minkowski sum of tropical varieties is the tropical analogue of the Hadamard product of algebraic varieties in the following sense. Given two varieties $X,Y\subset \T^n$, if the map $X\times Y \rightarrow X\star Y$ is generically one-to-one, then we have (see {\cite[Theorem 5.5.11]{Mac}}):
\begin{equation}\label{hadaminko}
\Trop(X\star Y)=\Trop(X)+\Trop(Y).
\end{equation}

The Pl\"ucker embedding realizes the Grassmannian $\op{Gr}(k,n)$ as a subvariety of the projective space $\mathbb{P}^{N-1}$. This allows us to define the tropical Grassmannian $\op{TropGr}(k,n)$ as the tropicalization $\Trop(\op{Gr^\circ}(k,n))$ of the intersection of $\op{Gr}(k,n)$ with the embedded torus $\T^{N-1}\subset \mathbb{P}^{N-1}$. The tropical Grassmannian is a parameter space for tropical varieties $\Trop(L)$ where $L$ is a linear space whose Pl\"ucker coordinates are all different from $0$. 
This condition comes from the fact that we are considering the tropicalization of the intersection $\op{Gr^\circ}(k,n)$ of $\op{Gr}(k,n)$ with the torus $\T^{N-1}$. 
For the rest of the paper we refer to those tropical varieties as tropicalized linear spaces. 

Given a point $p\in \op{TrGr}(k,n)$, we denote by $\Lambda_p\subset \mathbb{R}^{n+1}/\mathbb{R}$ the tropicalized linear space corresponding to it.

Consider a $\Gamma_{\op{val}}$-rational point $p\in \mathbb{R}^{n+1}/\mathbb{R}$ and the tropicalized linear space $\Lambda_q\subset\mathbb{R}^{n+1}/\mathbb{R}$ corresponding to the point $q\in \op{TrGr}(k,n) \subset \mathbb{R}^{N}/\mathbb{R}$. We pick a point $x$ with valuation $\op{val}(x)=p$ and a linear space $L$ with $\Trop(L)=\Lambda_q$. Equation \eqref{hadaminko} implies that the translation $\{p\} + \Lambda_q$ of $\Lambda_q$ by $p$ is the tropicalization of the Hadamard product $\{x\}\star L$. 

\begin{definition}\label{defgamma0}
Given a $\Gamma_{\op{val}}$-rational point $p\in \mathbb{R}^{n+1}/\mathbb{R}$ and $x\in\T^n$ with valuation $\op{val}(x)=p$, we define $\Gamma_p\eeq \Trop(\mathscr{G}_x)$.  
\end{definition}

\begin{lemma}\label{lem:trchset} 
\leavevmode
\begin{enumerate}
\item \label{lp:1} The tropical variety $\Gamma_p$ does not depend on the choice of the point $x$.
\item \label{lp:2} Given two $\Gamma_{\op{val}}$-rational points $p, q\in \mathbb{R}^{n+1}/\mathbb{R}$ we have $\Gamma_p=\varphi(q-p)+\Gamma_q$.
\item \label{lp:3} Given a tropicalized linear space $\Lambda\subset \mathbb{R}^{n+1}/\mathbb{R}$ and $x\in \T^n$, we have $\op{val}(x)\in \Lambda$ if and only if there exists a linear space $L\subset \mathbb{P}^n$ such that $\Lambda=\Trop(L)$ and $x\in L$.
\item \label{lp:4} We have the following equality of subsets of $\mathbb{R}^{n+1}/\mathbb{R}$
\begin{equation*}
	\Gamma_p=\{q \in \op{TrGr}(k, n) \mid p\in \Lambda_q\}.
\end{equation*}
\end{enumerate}
\end{lemma}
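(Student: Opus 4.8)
The plan is to prove the four parts in the order given, since later parts lean on earlier ones. For part~\eqref{lp:1}, I would use the isomorphism $m_y|_{\mathscr{G}_x}\colon \mathscr{G}_x\xrightarrow{\sim}\mathscr{G}_{x\star y}$ from Equation~\eqref{multisoEq} in the proof of Lemma~\ref{hadalemma}. If $x$ and $x'$ are two points with $\op{val}(x)=\op{val}(x')=p$, then $y\eeq x'\star x^{-1}$ (taking coordinatewise inverses of $x$) has $\op{val}(y)=0$, and $m_y$ carries $\mathscr{G}_x$ isomorphically to $\mathscr{G}_{x'}$; moreover $m_y$ is the Hadamard product with $\psi(y)$, and since $\op{val}(\psi(y))=\varphi(\op{val}(y))=\varphi(0)=0$, Equation~\eqref{hadaminko} (applied as in the paragraph before Definition~\ref{defgamma0}) gives $\Trop(\mathscr{G}_{x'})=\Trop(\psi(y)\star\mathscr{G}_x)=\{0\}+\Trop(\mathscr{G}_x)=\Trop(\mathscr{G}_x)$. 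So $\Gamma_p$ is well defined.

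For part~\eqref{lp:2}, the same mechanism applies without requiring the translation vector to vanish: choose $x$ with $\op{val}(x)=p$ and $x'$ with $\op{val}(x')=q$, set $y=x'\star x^{-1}$, so $\op{val}(y)=q-p$. Then $m_y(\mathscr{G}_x)=\mathscr{G}_{x'}$, hence $\Gamma_q=\Trop(\mathscr{G}_{x'})=\Trop(\psi(y)\star\mathscr{G}_x)=\varphi(\op{val}(y))+\Trop(\mathscr{G}_x)=\varphi(q-p)+\Gamma_p$, which rearranges to $\Gamma_p=\varphi(q-p)+\Gamma_q$ after noting $\varphi$ is linear so $\varphi(-(q-p))+\varphi(q-p)=0$ allows passing the translation to the other side. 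Part~\eqref{lp:3} is the step I expect to be the main obstacle, since it is a genuine statement about lifting tropical incidence to algebraic incidence rather than a formal consequence of the torus action. One direction is immediate: if $\Lambda=\Trop(L)$ and $x\in L$, then $\op{val}(x)\in\Trop(L^\circ)=\Lambda$ provided $x\in\T^n$, which holds by hypothesis. For the converse I would argue as follows: $\op{val}(x)\in\Lambda$ means $\op{val}(x)\in\Trop(L_0^\circ)$ for any fixed lift $L_0$ with $\Trop(L_0)=\Lambda$; by the Fundamental Theorem of tropical geometry there is a point $x'\in L_0^\circ$ with $\op{val}(x')=\op{val}(x)$; now act by $m_y$ with $y=x\star x'^{-1}$, which has valuation $0$, so $L\eeq m_y(L_0)$ satisfies $x=x'\star y\in m_y(L_0)=L$ and $\Trop(L)=\varphi(0)+\Lambda=\Lambda$. (One should check that $m_y$ on $\mathbb{P}^{N-1}$ corresponds to an honest change of coordinates on $\mathbb{P}^n$ taking linear spaces to linear spaces; this is exactly the content of the claim around Equation~\eqref{matApsi}, applied with the multiplier $y$.)

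Finally, part~\eqref{lp:4} combines the previous items. Fix $x$ with $\op{val}(x)=p$. By Definition~\ref{defgamma0}, $\Gamma_p=\Trop(\mathscr{G}_x)$, and since $\mathscr{G}_x\subset\op{Gr}(k,n)$ we have $\Gamma_p\subset\op{TrGr}(k,n)$. A point $q\in\op{TrGr}(k,n)$ lies in $\Trop(\mathscr{G}_x)=\Trop(\mathscr{G}_x^\circ)$ iff, by the Fundamental Theorem, there is a point $z\in\mathscr{G}_x^\circ$ with $\op{val}(z)=q$; such a $z$ corresponds to a linear space $L_z$ with all Pl\"ucker coordinates nonzero, $x\in L_z$, and $\Trop(L_z)=\Lambda_q$. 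By part~\eqref{lp:3} the existence of such an $L_z$ is equivalent to $p=\op{val}(x)\in\Lambda_q$. Hence $q\in\Gamma_p\iff p\in\Lambda_q$, which is the desired set equality. The only subtlety to handle carefully is that $\Trop$ of a variety in $\mathbb{P}^{N-1}$ is by convention $\Trop$ of its intersection with the torus $\T^{N-1}$, so one must make sure the lift $z$ can be chosen inside $\mathscr{G}_x\cap\T^{N-1}$; this is guaranteed because $\Trop(\mathscr{G}_x)$ is defined as the tropicalization of $\mathscr{G}_x^\circ$ in the first place.
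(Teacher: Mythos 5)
Your proposal is correct and follows essentially the same route as the paper: parts \eqref{lp:1} and \eqref{lp:2} via the identity $\mathscr{G}_{x}=\psi(x\star y^{-1})\star\mathscr{G}_{y}$ from the proof of Lemma~\ref{hadalemma} together with Equation~\eqref{hadaminko}, part \eqref{lp:3} via the Fundamental Theorem plus a valuation-zero Hadamard translation, and part \eqref{lp:4} as a direct consequence of \eqref{lp:3}. One small algebraic slip in part \eqref{lp:2}: from $\Gamma_q=\varphi(q-p)+\Gamma_p$ the correct rearrangement is $\Gamma_p=\varphi(p-q)+\Gamma_q$, not $\varphi(q-p)+\Gamma_q$; you are matching a sign typo in the lemma's statement itself (the paper's own proof derives $\varphi(p-q)$, consistent with the remark that $\Gamma_p=\varphi(p)+\Gamma_0$), so the underlying argument is fine but the "passing the translation to the other side" step as written is not a valid rearrangement.
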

\begin{proof}
Given $x,y\in \T^n$, we have seen in the proof of Theorem~\ref{hadalemma} that $\mathscr{G}_x=\psi(x\star y^{-1})\star \mathscr{G}_y$ where $y^{-1}=[y_0^{-1}: \ldots: y_{n}^{-1}]$. 

If $\op{val}(x)=p$ and $\op{val}(y)=q$ then $\op{val}(x\star y^{-1})=p-q$, and so, by Equation \eqref{hadaminko}, the equality $\mathscr{G}_x=\psi(x\star y^{-1})\star \mathscr{G}_y$ tropicalizes to $\Gamma_p=\varphi(p-q)+\Gamma_q$. This proves \eqref{lp:2} and, as a particular case when $p=q$, \eqref{lp:1}.

We now prove \eqref{lp:3}. One implication is trivial: if $\Lambda=\Trop(L)$ and $x\in L$ then $\op{val}(x)\in \Lambda$. On the other hand if $\Lambda=\Trop(L)$ and $\op{val}(x)\in \Lambda$ then, by the Fundamental Theorem of Tropical Geometry (\cite[Theorem 3.2.5]{Mac}), there exists $y\in L$ with $\op{val}(y)=\op{val}(x)$. We have that $L'=(x\star y^{-1})\star L$ is again a linear space and $x\in L'$. Moreover $\op{val}(x\star y^{-1})=(0,\ldots, 0)$ and therefore $\Trop(L')=0+\Trop(L)=\Lambda$.

To conclude, \eqref{lp:4} is a consequence of \eqref{lp:3} as, for any fixed $x$ with $\op{val}(x)=p$, we have
\begin{equation*}
\Gamma_p=\Trop(\mathscr{G}_x)=\{q \in \op{TrGr}(k, n)\ \mid \Lambda_q=\Trop(L_z)\text{ and } z\in \mathscr{G}_x\}.
\end{equation*}
\end{proof}

\begin{remark}
Given $p\in \mathbb{R}^{n+1}/\mathbb{R}$ not necessarily $\Gamma_{\op{val}}$-rational, we can define $\Gamma_p$ to be the weighted polyhedral complex $\varphi(p)+\Gamma_0$. This is consistent with Definition \ref{defgamma0}.
\end{remark}

One consequence of Lemma~\ref{lem:trchset}, and in particular of point \eqref{lp:3}, is that the support set of $\Trop(Z_X)$ can be described in terms of $\Trop(X)$. Take a $\Gamma_{\op{val}}$-rational  point $p\in \Trop(X)$ and suppose that a tropicalized linear space $\Lambda$ intersects $\Trop(X)$ at $p$. Then there is a point $x\in X$ with valuation $\op{val}(x)=p$. Using Lemma~\ref{lem:trchset} we see that there exists a linear space $L\subset \mathbb{P}^n$, with $\Lambda = \Trop(L)$, that contains $x$. This shows the following equality of sets:
\begin{equation}\label{seteq1}
	\left|\Trop(Z_X) \right| = \{ p \in \op{TrGr}(k,n) \mid \Lambda_p \cap \Trop(X) \neq \emptyset \}.
\end{equation}

For any subset $S\subset \mathbb{R}^n/\mathbb{R}$, a tropicalized linear space $\Lambda_p$ is intersects $S$ if and only if $p\in \Gamma_s$ for some $s\in S$. By Lemma~\ref{lem:trchset}, $\Gamma_s=\varphi(s) + \Gamma_0$, and we get another equality of sets:
\begin{equation}\label{seteq2}
	\{ p \in \op{TrGr}(k,n) \mid \Lambda_p \cap S \neq \emptyset \} = \varphi(S) + \left| \Gamma_0 \right|.
\end{equation}

Combining Equation \eqref{seteq1} and Equation \eqref{seteq2} we get that 
\begin{equation*}
\left|\Trop(Z_X) \right| = \left| \varphi(\Trop(X))\right| + \left| \Gamma_0 \right|.
\end{equation*}
This is actually not just an equality of sets, but an equality of tropical varieties as the following Theorem shows.

\begin{theorem}\label{troptochmul}
Let $X$ be of pure dimension $n-k-1$ and assume none of its irreducible components is contained in $\V(x_1\cdot\ldots\cdot x_{n})$. We have the following equality of tropical varieties: 
\begin{equation*}
	\Trop(Z_X)=\varphi(\Trop(X)) + \Gamma_0.
\end{equation*}
\end{theorem}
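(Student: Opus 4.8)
The plan is to upgrade the set-theoretic equality $\left|\Trop(Z_X)\right| = \left|\varphi(\Trop(X))\right| + \left|\Gamma_0\right|$, already established before the theorem statement, to an equality of weighted balanced polyhedral complexes. The natural strategy is to factor the construction through the Hadamard product description from Lemma~\ref{hadalemma} and push everything through the tropicalization of a fibered product. First I would reduce to the irreducible case: since $X$ has pure dimension $n-k-1$ with no component in $\V(x_1\cdots x_n)$, write $X = X_1 \cup \dots \cup X_r$, note $Z_X = Z_{X_1}\cup\dots\cup Z_{X_r}$ as cycles (the Chow form is multiplicative), $\Trop(X) = \bigcup \Trop(X_i)$ as weighted complexes, and $\varphi(\cdot) + \Gamma_0$ distributes over unions of weighted complexes by the definition of Minkowski sum of polyhedral complexes given above; so it suffices to treat each $X_i$.

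For $X$ irreducible meeting $\T^n$, Lemma~\ref{hadalemma} gives $Z_X = \psi(X^\circ)\star\mathscr{G}_u$, and Remark~\ref{os:fibrahada} says the multiplication map $\psi(X^\circ)\times\mathscr{G}_u^\circ \to \psi(X^\circ)\star\mathscr{G}_u^\circ$ is generically one-to-one. Hence Equation~\eqref{hadaminko} (i.e. \cite[Theorem 5.5.11]{Mac}) applies and yields the equality of weighted tropical varieties $\Trop(Z_X) = \Trop(\psi(X^\circ)) + \Trop(\mathscr{G}_u)$. Now $\Trop(\mathscr{G}_u) = \Gamma_0$ by Definition~\ref{defgamma0} with $x = u$, $\op{val}(u) = 0$. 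It remains to identify $\Trop(\psi(X^\circ))$ with $\varphi(\Trop(X))$ as weighted complexes. Since $\psi$ is the monomial (toric) morphism associated to the linear map $\varphi$, this is the statement that tropicalization commutes with such a monomial map, including multiplicities: $\Trop(\psi(X^\circ)) = \varphi_*(\Trop(X^\circ))$, the pushforward of the weighted complex $\Trop(X)$ under $\varphi$. This follows from the general functoriality of tropicalization under homomorphisms of tori — the set-level statement is \cite[Corollary 3.2.13]{Mac}, and the refinement to multiplicities (with the lattice-index weights in Equation~\eqref{eq:minkw} matching the pushforward multiplicities) is the content of \cite[Proposition 3.4.5 / Theorem 6.2.3-type]{Mac} on pushforwards of tropical cycles; one must check $\varphi$ restricted to $\Trop(X)$ is generically finite onto its image, which holds because $\psi$ is a closed immersion on $\T^n$ and hence birational onto $\psi(\T^n)$, so $\varphi$ is injective and the pushforward multiplicities are just the original ones transported by the lattice index $[\IZ^N \cap \ls(\varphi(\sigma)) : \varphi(\IZ^{n+1}\cap \sigma + \dots)]$.

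The one genuine subtlety — and the step I expect to be the main obstacle — is the compatibility of the two weight conventions: the Minkowski-sum weight \eqref{eq:minkw} on $\varphi(\Trop(X)) + \Gamma_0$ is built by summing $m_{\sigma_1+\sigma_2}$ over pairs $(\sigma_1,\sigma_2)$ with $\sigma_1 \subset \varphi(\Trop(X))$, $\sigma_2 \subset \Gamma_0$ that contain a given top cell, whereas the weight coming from \eqref{hadaminko} is the pushforward of the product weight on $\Trop(\psi(X^\circ))\times\Gamma_0$ under the sum map $\alpha$. These agree precisely by the "equivalently" paragraph preceding the theorem, which defines the Minkowski sum of weighted complexes as $\alpha_*$ of the product — so really the equality is true by construction of the right-hand side, and the work is just to make sure the hypotheses of \cite[Theorem 5.5.11]{Mac} are met (pure dimension, generic one-to-one-ness, and that no component of $Z_X$ escapes to the boundary torus, which is where the hypothesis "no component of $X$ in $\V(x_1\cdots x_n)$" is used, via the last paragraph of the proof of Lemma~\ref{hadalemma}). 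I would close by assembling: $\Trop(Z_X) = \Trop(\psi(X^\circ)) + \Trop(\mathscr{G}_u) = \varphi(\Trop(X)) + \Gamma_0$, invoking the irreducible-case argument componentwise.
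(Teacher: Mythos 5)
Your proposal is correct and takes essentially the same route as the paper's proof, which combines Lemma~\ref{hadalemma}, Remark~\ref{os:fibrahada} and Equation~\eqref{hadaminko} with the functoriality of tropicalization under the monomial map $\psi$ (citing \cite[Corollary 2.6.10]{Mac} to get $\Trop(\psi(X^\circ))=\varphi(\Trop(X))$). The only difference is that you spell out the reduction to irreducible components and the multiplicity bookkeeping for the pushforward, which the paper leaves implicit.
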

\begin{proof}
Let $u=[1:\ldots: 1]\in \T^n$ and let $\psi$ be the map defined by Equation \eqref{defpsi}.

As $\psi$ is a monomial morphism we have that, see \cite[Corollary 2.6.10]{Mac}, $\Trop(\psi(X^\circ))$ equals $\Trop(\psi) \ (\Trop(X)) \subset \mathbb{R}^{N}/\mathbb{R}$ where $\Trop(\psi)$ is the linear map $\Trop(\psi):\mathbb{R}^{n+1}/\mathbb{R}\rightarrow \mathbb{R}^N/\mathbb{R}$ given by multiplication by the matrix $A=(a)_{i, I}$ with 
\begin{equation*}
	(a)_{i, I}=\left \{  
	\begin{array}{ll}
		0 & \text{if }x\notin I\\
		1 & \text{if }x\in I
	\end{array}
	\right.
\end{equation*}

We have $\Trop(\psi)=\varphi$ and hence $\Trop(\psi)(\Trop(X))=\varphi(\Trop(X))$.

The result follows immediately from Lemma~\ref{hadalemma}, Remark~\ref{os:fibrahada} and Equation \eqref{hadaminko}.
\end{proof}

Theorem~\ref{troptochmul} allow us to define a notion of associated hypersurface for \emph{any} tropical variety of pure dimension. We remind that by tropical variety we mean balanced weighted $\Gamma_{\op{val}}$-rational polyhedral complex, in the sense of \cite[Theorem 3.3.5]{Mac}.
\begin {definition}
Given a tropical variety $\Sigma\subset \mathbb{R}^{n+1}/\mathbb{R}$, the tropical Chow hypersurface $Z_\Sigma\subset \mathbb{R}^{N}/\mathbb{R}$ associated to $\Sigma$ is defined to be
\begin{equation*}
Z_\Sigma=\varphi(\Sigma)+\Gamma_0.
\end{equation*}
\end{definition}

Let $V\subset \mathbb{R}^{n+1}/\mathbb{R}$ be the support of a pure-dimensional fan. We denote by $\op{A}_{\text{unbal}}^{k}(V)$ the set of pure codimension-$k$ $\mathbb{Q}$-weighted $\Gamma_{\op{val}}$-rational polyhedral complexes whose support is contained in $V$, and by $\op{A}^{k}(V)$ the set of pure codimension-$k$ balanced $\mathbb{Q}$-weighted $\Gamma_{\op{val}}$-rational polyhedral complexes whose support is contained in $V$. The support set of a weighted polyhedral complex is the union of all maximal polyhedra that have non-zero multiplicity. As usual in tropical intersection theory, two weighted polyhedral complexes $(\Sigma_1, m_1)$ and $(\Sigma_2, m_2)$ are identified if their polyhedral complexes $\Sigma_1$ and $\Sigma_2$ have the same support set, and if, moreover, given two maximal dimensional polyhedra $\sigma_1\in\Sigma_1$ and $\sigma_2\in\Sigma_2$ whose relative interiors intersect, their multiplicity $m_1(\sigma_1)$ and $m_2(\sigma_2)$ are the same. In other words we identify weighted polyhedral complexes that are the same up to the choice of polyhedral structure. This also means that we can freely add and remove polyhedra with multiplicity $0$ without changing the polyhedral complex, and that a polyhedral complex is $0$ if and only if all its polyhedra have multiplicity $0$.

The set $\op{A}_{\text{unbal}}^{k}(V)$ is a vector space over $\mathbb{Q}$ (this is the main reason to consider rational weights rather than integer). 
The sum $(\Sigma, m)$ of two weighted polyhedral complex $(\Sigma_1, m_1)$ and $(\Sigma_2, m_2)$ is defined as follows. The support set of $\Sigma$ is $\Sigma_1 \cup \Sigma_2$. Up to subdividing $\Sigma_1$ and $\Sigma_2$ it can be given a polyhedral complex structure such that every polyhedron $\sigma \in \Sigma$ is a polyhedron of $\Sigma_1$ or $\Sigma_2$ (or both). The multiplicity $m(\sigma)$ of a polyhedron $\sigma\in \Sigma$ is defined to be $m_1(\sigma)+m_2(\sigma)$, where $m_1(\sigma)$ (resp. $m_2(\sigma)$) is defined to be $0$ if $\sigma$ is not a polyhedron of $\Sigma_1$ (resp. $\Sigma_2$). 
For $x\in \mathbb{Q}$, the  multiplication of $(\Sigma, m)$ by $k$ is the weighted polyhedral complex $(\Sigma, k\cdot m)$, where $k\cdot m$ is the weight defined by $(k\cdot m) (\sigma)= k\cdot m(\sigma)$ for any maximal polyhedron $\sigma\in \Sigma$.

As every polyhedral complex is a finite union of polyhedra we have that, using the operations just defined on $\op{A}_{\text{unbal}}^{k}(V)$, any weighted polyhedral complex can be written as sum of weighted polyhedral complex whose support is a single polyhedron. Moreover, as the scalar multiplication acts as multiplication on the weight, the set of polyhedral complexes made of a single codimension $k$ polyhedron, with multiplicity one is a set of generators for $\op{A}_{\text{unbal}}^{k}(V)$.

We also have a vector space structure on $\op{A}^{k}(V)$, inherited from the structure on $\op{A}_{\text{unbal}}^{k}(V)$. Actually $\op{A}^{k}(V)$ is a vector subspace of $\op{A}_{\text{unbal}}^{k}(V)$.

We define the map 
\begin{equation}\label{chowmap}
\begin{array}{cccc}
\op{Z}: & \op{A}^k(\mathbb{R}^{n+1}/\mathbb{R})& \longrightarrow & \op{A}^1(\op{TrGr}(k,n)) \\
 & \Sigma & \longmapsto & Z_\Sigma=\varphi(\Sigma)+\Gamma_0.
\end{array}
\end{equation}
We now prove that this is a linear transformation by showing that its extension $\op{Z}': \op{A}_{\text{unbal}}^k(\mathbb{R}^{n+1}/\mathbb{R}) \rightarrow  \op{A}_{\text{unbal}}^1(\op{TrGr}(k,n))$ is linear. The linearity of $\op{Z}'$ on fan that consists of a single cone follows immediately from the linearity in $m_1$ of Equation \eqref{eq:minkw}. As single polyhedra span $\op{A}_{\text{unbal}}^k(\mathbb{R}^{n+1}/\mathbb{R})$, $\op{Z}'$ is linear and then so is $\op{Z}$.

\section{From tropical Chow hypersurface to tropical variety}

In this section we address the question whether it is possible to recover the tropical variety $\Trop(X)$ from the tropical Chow hypersurface $\Trop(Z_X)$ or, in other words whether the map \eqref{chowmap} is injective. 

The hypersurface $Z_X$ is the vanishing locus in the Grassmannian of a single polynomial in the ring of polynomials in the Pl\"ucker variables $K[p_I \mid   I\in \binom{[n+1]}{k+1}]$. The coordinate ring of the Grassmannian $K[\op{Gr}(k,n)]$ is the quotient of $K[p_I]$ by the Pl\"ucker ideal. The class $\op{ch}_X$ of this polynomial in $K[\op{Gr}(k,n)]$ is uniquely determined by $X$, and it is called \emph{Chow form} of $X$.

Different lifts of $\op{ch}(X)$ to $K[p_I\mid I\in \binom{[n+1]}{k+1}]$ have different Newton polytopes in $\mathbb{R}^N/\mathbb{R}$, so that there is not a natural notion of Newton polytope for the Chow form $\op{ch}_X$. There is, however, a polytope in $\op{P_{ch_X}}\subset\mathbb{R}^{n+1}/\mathbb{R}$, called \emph{Chow polytope}. This is the weight polytope associated to the natural action of $\T^n$ on $K[\op{Gr}(k,n)]$. Explicitly, given a monomial $\prod{p_{I}^{a_I}}\in K[p_I]$, its weight is $\sum a_Ie_{I}\in \mathbb{R}^{n+1}/\mathbb{R}$, where $e_I=\sum_{i\in I} e_i$ and $e_0, \ldots, e_n$ is the image of the standard basis of $\mathbb{R}^{n+1}$ in $\mathbb{R}^{n+1}/\mathbb{R}$. For any lift $\overline{\op{ch}}_X\in K[p_I]$ of the Chow form $\op{ch}_X$ we can write $\overline{\op{ch}}_X = c_1 + \ldots +c_l$, where each $c_i$ is a sum of monomials with the same weight $p_i$. The Chow polytope $P_{\op{ch}_{X}}$ is the convex hull of the weights $p_i$, for every $i$ such that $[c_i]\neq 0$.

\begin{example}
Consider the conic $C=\V(t,x^2+y^2+z^2)\subset \mathbb{P}^3$, where $x,y,z,t$ are the coordinates of $\mathbb{P}^3$. Its Chow form can be computed (for example using the algorithm described in \cite[Section 3.1]{DalStur}) as the class in $K[\op{Gr}(k,n)]$ of the polynomial $c= p_{03}^2+p_{13}^2+p_{23}^2$. The Chow polytope is, in this case, the convex hull of the three weights $2(e_0+e_3), \ 2(e_1+e_3), \ 2(e_2+e_3)$ of the three monomials of $c$. Consider the polynomial $c'=c+p_{12}p_{03}-p_{02}p_{13}+p_{01}p_{23}$, we have again that the class of $c'$ is the Chow form of $C$ because $c$ and $c'$ only differ by an element of the Pl\"ucker ideal. The weight $e_0+e_1+e_2+e_3$ now appears as the weight of some monomial of $c'$. This is not source of ambiguity in the definition of the Chow polytope: if we sum all the monomials of $c'$ with weight $e_0+e_1+e_2+e_3$ we get the polynomial $p_{12}p_{03}-p_{02}p_{13}+p_{01}p_{23}$ whose class is $0$ modulo the Pl\"ucker ideal.
\end{example}

\begin{remark}\label{choweight}
Let $\overline{\op{ch}}_X $ be a lift of $\op{ch}_X $, and write $\overline{\op{ch}}_X = c_1 + \ldots c_l$, with each  $c_i$ being a sum of monomials with the same weight. We can always get another lift  
\begin{equation*}
	\overline{\op{ch}}_X - \sum_{i \mid [c_i]=0} c_i,
\end{equation*}
where $[c_i]$ denotes the class of $c_i$ modulo the Pl\"ucker ideal, such that the Chow Polytope $P_X$ is the convex hull of the weights of its monomials. In other words, $P_X$ is the projection of the Newton polygon of this lift under the linear map that sends the vector $e_I \in \mathbb{R}^N/\mathbb{R}$ to $\sum_{i\in I} e_i \in \mathbb{R}^{n+1}/\mathbb{R}$.
\end{remark}

The codimension-one skeleton $\mathcal{N}^1(\op{P}_X)$ of the (inner) normal fan of $\op{P}_X$ was studied by Fink in \cite{Fink}. In particular he proved a Minkowski sum decomposition for $\mathcal{N}^1(\op{P}_X)$ which we now recall.

The linear space $\Lambda_0$ corresponding to the origin $0\in \op{TropGr}(k, n)$ is called the \emph{standard} tropical linear $k$-plane. The rays of $\Lambda_0$ are $\op{pos}( e_0 ), \ldots \op{pos}( e_n )$, and every subset of $k$ rays spans a maximal cone of multiplicity one of $\Lambda_0$.

Denote by $-\Lambda_0$ the image of $\Lambda_0$ under the map $-\op{id}_{\mathbb{R}^{n+1}/\mathbb{R}}: p \mapsto -p$. Then we have (see \cite[Theorem 5.1]{Fink})
\begin{equation}\label{finkthm}
	\mathcal{N}^1(\op{P}_X)= X+ (-\Lambda_0).
\end{equation}

\begin{proposition} Let $H=\varphi(\mathbb{R}^{n+1}/\mathbb{R})$ be the image of $\mathbb{R}^{n+1}/\mathbb{R}$ in $\mathbb{R}^N/\mathbb{R}$. Then 
	\begin{enumerate}
		\item \label{it:ZXvsNP} $\Trop(Z_X)\cap H = \varphi(\mathcal{N}^1(\op{P}_X))$,
		\item \label{it:G0vsL0}$(\varphi(X) + \Gamma_0) \cap H = \varphi(X + (-\Lambda_0))$.
	\end{enumerate}
	In particular Equation~\eqref{finkthm} can be obtained by intersecting the equality of Theorem~\ref{troptochmul} with $H$.
\end{proposition}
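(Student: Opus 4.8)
The plan is to prove the equalities (1) and (2) independently, and then obtain the concluding remark by intersecting the identity of Theorem~\ref{troptochmul} with $H$ and cancelling $\varphi$. The cancellation is legitimate because $\varphi$ is injective: if $\sum_{i\in I}a_i$ does not depend on $I\in\binom{[n+1]}{k+1}$, then comparing two index sets that differ in a single element shows that all the $a_i$ are equal, so $(a_i)=0$ in $\mathbb{R}^{n+1}/\mathbb{R}$ (the case $k=0$ gives the identity map, the case $k=n$ is vacuous). I will first establish (1) and (2) at the level of sets and treat the comparison of multiplicities separately at the end.

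For (2) the key step is to compute $\Lambda_{\varphi(w)}$ for $w\in\mathbb{R}^{n+1}/\mathbb{R}$. I would pick a linear space $L_0\subset\mathbb{P}^n$ all of whose Pl\"ucker coordinates have valuation $0$ (so $\Trop(L_0)=\Lambda_0$) and a point $x\in\T^n$ with $\op{val}(x)=w$. The coordinate rescaling $x\star L_0$ lies in $\op{Gr}^\circ(k,n)$ (it equals $m_x(L_0)$, and $m_x$ preserves the Grassmannian, as in the proof of Lemma~\ref{hadalemma}), has Pl\"ucker coordinates of valuation $\sum_{i\in I}w_i=\varphi(w)_I$, and satisfies $\Trop(x\star L_0)=\{w\}+\Lambda_0$ by Equation~\eqref{hadaminko} (compare the discussion following Lemma~\ref{lem:trchset}). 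Hence $\varphi(w)\in\op{TrGr}(k,n)$ and $\Lambda_{\varphi(w)}=\{w\}+\Lambda_0$; in particular $H\subset\op{TrGr}(k,n)$. By Lemma~\ref{lem:trchset}\eqref{lp:4} this gives $\varphi(w)\in\Gamma_0\iff 0\in\{w\}+\Lambda_0\iff w\in-\Lambda_0$, so $\Gamma_0\cap H=\varphi(-\Lambda_0)$. Finally, since $\varphi(\Trop X)\subset H$ and $H$ is a linear subspace, $(\varphi(\Trop X)+\Gamma_0)\cap H=\varphi(\Trop X)+(\Gamma_0\cap H)$ (if $a+b\in H$ with $a\in H$ then $b\in H$), which by linearity of $\varphi$ equals $\varphi(\Trop X)+\varphi(-\Lambda_0)=\varphi(\Trop X+(-\Lambda_0))$; this is (2).

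For (1), by injectivity of $\varphi$ it suffices to prove $\varphi^{-1}(\Trop(Z_X))=\mathcal{N}^1(\op{P}_X)$, and the idea is to recognise $\op{in}_{\varphi(w)}(\op{ch}_X)$ through the face structure of the Chow polytope. Fix a lift $\overline{\op{ch}}_X\in K[p_I]$ of the Chow form as in Remark~\ref{choweight}, so that $\op{P}_X$ is the image of $\op{Newt}(\overline{\op{ch}}_X)$ under the projection $\pi\colon\bar e_I\mapsto\sum_{i\in I}\bar e_i$, which is the transpose of $\varphi$; consequently $\langle\varphi(w),a\rangle=\langle w,\pi(a)\rangle$ for every exponent vector $a$, so the $\varphi(w)$-weight of a Pl\"ucker monomial depends only on its $\T^n$-weight. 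Writing $\overline{\op{ch}}_X=\sum_i c_i$ as a sum of $\T^n$-homogeneous parts of weights $v_i$ (with $[c_i]\neq0$ for every vertex $v_i$ of $\op{P}_X$), the class of $\op{in}_{\varphi(w)}(\overline{\op{ch}}_X)$ modulo the Pl\"ucker ideal is $\sum_{v_i\in\op{face}_w(\op{P}_X)}[c_i]$. By the Kapranov--Sturmfels--Zelevinsky description of the Chow polytope~\cite{KapStuZel}, the graded piece attached to a vertex of $\op{P}_X$ is, up to a unit, a monomial in the Pl\"ucker coordinates; hence this class fails to be a monomial (and then cuts out a proper nonempty subvariety of the nonempty initial degeneration $\op{in}_{\varphi(w)}(\op{Gr}^\circ)$, using $H\subset\op{TrGr}(k,n)$ from (2)) exactly when $\op{face}_w(\op{P}_X)$ is not a vertex, i.e.\ when $w\in\mathcal{N}^1(\op{P}_X)$. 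By the Fundamental Theorem of Tropical Geometry applied to the hypersurface $Z_X$ of $\op{Gr}(k,n)$, this is precisely $\varphi(w)\in\Trop(Z_X)$, proving (1). The concluding assertion now follows: intersecting $\Trop(Z_X)=\varphi(\Trop X)+\Gamma_0$ with $H$ and using (1) and (2) gives $\varphi(\mathcal{N}^1(\op{P}_X))=\varphi(\Trop X+(-\Lambda_0))$, and cancelling $\varphi$ yields Equation~\eqref{finkthm}.

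I expect the main obstacle to be the rigour of (1), specifically the passage from ``$\op{in}_{\varphi(w)}(\op{ch}_X)$ is not a monomial mod the Pl\"ucker ideal'' to ``$\varphi(w)\in\Trop(Z_X)$''. Since initial ideals need not commute with sums of ideals, one must show that $\op{in}_{\varphi(w)}\!\big(I(\op{Gr}^\circ)+(\overline{\op{ch}}_X)\big)=\op{in}_{\varphi(w)}I(\op{Gr}^\circ)+\big(\op{in}_{\varphi(w)}\overline{\op{ch}}_X\big)$; this is the technical heart and is essentially the statement (made by Kapranov--Sturmfels--Zelevinsky) that $\op{in}_{\varphi(w)}(Z_X)$ is the Chow hypersurface of $\op{in}_w(X)$. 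A secondary point is that a lift $\overline{\op{ch}}_X$ may have coefficients of positive valuation, which could subdivide $\op{Newt}(\overline{\op{ch}}_X)$; one must check that this subdivision is killed by the projection $\pi$, equivalently by restriction to $H$ --- which is exactly why $\Trop(Z_X)\cap H$ is a fan rather than a general polyhedral complex. By contrast, (2) and the cancellation step are essentially formal. Finally, upgrading the three set equalities to equalities of weighted polyhedral complexes is a routine check: $-\Lambda_0$ carries weight one on every cone, the maximal cones of $\Gamma_0$ meeting $H$ in full dimension also carry weight one, and the Minkowski weights on $\varphi(\Trop X+(-\Lambda_0))$ coincide with the multiplicities of $\Trop(Z_X)$ on the corresponding codimension-one cells.
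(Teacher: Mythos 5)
Your part (2) and the final cancellation step are correct and essentially the paper's own argument: the paper likewise observes that $\varphi(v)\in\Gamma_0$ iff $\Lambda_{\varphi(v)}=v+\Lambda_0$ contains the origin iff $-v\in\Lambda_0$, deduces $\Gamma_0\cap H=\varphi(-\Lambda_0)$, and distributes the intersection with $H$ over the Minkowski sum using $\varphi(\Trop(X))\subset H$. Part (1), however, is where your proof has a genuine gap --- precisely the one you flag yourself. Your route through initial ideals needs (a) the compatibility $\op{in}_{\varphi(w)}\bigl(I(\op{Gr}^\circ(k,n))+(\overline{\op{ch}}_X)\bigr)=\op{in}_{\varphi(w)}I(\op{Gr}^\circ(k,n))+\bigl(\op{in}_{\varphi(w)}\overline{\op{ch}}_X\bigr)$, which fails for general sums of ideals and is not established here; (b) the Kapranov--Sturmfels--Zelevinsky theorem that the graded piece of $\overline{\op{ch}}_X$ at a vertex of $\op{P}_X$ is a unit times a monomial modulo the Pl\"ucker ideal, invoked without proof and doing all the real work; and (c) the claim that a non-monomial class genuinely vanishes somewhere on $\op{in}_{\varphi(w)}(\op{Gr}^\circ(k,n))$, which requires care since that initial degeneration need not be irreducible. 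You also leave unresolved the issue of coefficients of positive valuation subdividing $\op{Newt}(\overline{\op{ch}}_X)$. As written, (1) is an outline resting on unproved external results rather than a proof.

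The paper avoids all of this with a more elementary device. Fix $z_0\in\op{Gr}(k,n)$ with $\op{val}(z_0)=0$ and restrict the Chow form along the orbit map $j_{z_0}(x)=\psi(x)\star z_0$: the polynomial $F(x)=\overline{\op{ch}}_X(j_{z_0}(x))$ on $\T^n$ satisfies $\op{Newt}(F)=\op{P}_X$ (each Pl\"ucker monomial $p^a$ becomes, up to coefficient, the monomial $x^{\pi(a)}$ of degree equal to its $\T^n$-weight, and Remark~\ref{choweight} rules out the disappearance of whole graded pieces), so $\mathcal{N}^1(\op{P}_X)=\Trop(\V(F))$. Statement (1) then reduces to a point-level check: a $\Gamma_{\op{val}}$-rational $v\in\mathcal{N}^1(\op{P}_X)$ lifts to $x\in\V(F)$ with $\op{val}(x)=v$, and $j_{z_0}(x)\in Z_X$ has valuation $\varphi(v)$; conversely, given $z\in Z_X$ with $\op{val}(z)=\varphi(v)$, choose $x$ with $\op{val}(x)=v$ and set $z_0=\psi(x)^{-1}\star z$, so that $x\in\V(F)$ and $v\in\mathcal{N}^1(\op{P}_X)$. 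To repair your argument, either supply the initial-ideal compatibility in the valued setting (essentially reproving the KSZ results you cite) or switch to this pullback argument, which uses only Lemma~\ref{hadalemma}, Equation~\eqref{hadaminko} and the Fundamental Theorem.
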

\begin{proof}
	Fix a point $z_0\in \op{Gr}(k,n)$ with $\op{val}(z_0)=0$. We consider the morphism
	\begin{align*}
		j_{z_0}	:	\T^n 	\rightarrow	\op{Gr}(k,n)	\\
							x			\mapsto	\psi(x)\star z_0.
	\end{align*}
		We choose a lift $\overline{\op{ch}}_X\in K[p_I \mid I \in \binom{[n+1]}{k+1}]$ of $\op{ch}_X$ with the property of Remark~\ref{choweight}.
		Let $A_{z_0}$ be the subvariety of $\T^n$ defined by $A_{z_0}= \{ x\in \T^n\ \mid \ j_{z_0}(x)\in Z_X\}$. A point $j_{z_0}(x)$ is in $Z_X$ if and only if $\op{\overline{ch}}_X(j_{z_0}(x))=0$. Denote by $F$ the polynomial defined by $F(x)\eeq  \op{\overline{ch}}_X(j_{z_0}(x))$. We have $A_{z_0}=\V(F)\subset \T^n$. We claim that $\op{Newt}(F)=\op{P}_X$, so that $\Trop(A_{z_0})= \mathcal{N}^1(\op{P}_X)$. Indeed, the monomials of $F$ are obtained, up to coefficient, from the monomials of $\op{\overline{ch}}_X$ by the ring homomorphism defined by $p_I\mapsto \prod_{i\in I}x_i$. In particular the degree of a monomial of $F$ equals the weight of the corresponding monomial of $\op{\overline{ch}}_X$. The claim follows from Remark~\ref{choweight}.
		
		We can now prove \ref{it:ZXvsNP} by showing that the two sets have the same $\Gamma_{\op{val}}$-rational points. Let $v\in \mathcal{N}^1(P_X)$ be a $\Gamma_{\op{val}}$-rational point and fix any point $z_0\in \op{Gr}(k,n)$ with $\op{val}(z_0)=0$. Then there exists some $x\in A_{z_0}$ with $\op{val}(x)=v$, in particular $j_{z_0}(x) \in Z_X$ and, as $\op{val}(j_{z_0}(x))= \op{val}(\psi(x) \star z_0) = \varphi(v) + 0 =\varphi(v)$, we have $\varphi(v)\in \Trop(Z_X)$. Conversely given a $\Gamma_{\op{val}}$-rational point $\varphi(v)\in \Trop(Z_X) \cap H$ we can find $z\in Z_X$ with $\op{val}(z)=\varphi(v)$. Let $x\in \T^n$ be any point with valuation $v$, and let $z_0= \psi(x)^{-1} \star z$, where by $\psi(x)^{-1}$ we mean the point in $\mathbb{P}^{N-1}$ whose $i$-th coordinate is the inverse of the $i$-th coordinate of $\psi(x)$. Then $x\in A_{z_0}$ as $j_{z_0}(x) = z \in Z_X$, so that $v = \op{val}(x) \in \mathcal{N}^1(\op{P}_X)$. This concludes the proof of \ref{it:ZXvsNP}.
		
	A point $\varphi(v)\in \varphi(\mathbb{R}^{n+1}/\mathbb{R})$ lies in $\Gamma_0$ if and only if the linear space $\Lambda_{\varphi(v)}=v+\Lambda_0$ contains the origin, and this happens if and only if $-v\in \Lambda_0$. This implies that $\varphi(-\Lambda_0)=\Gamma_0\cap H$. On the other hand $\varphi(X)$ is already contained in $H$. Therefore $(\varphi(X) + \Gamma_0) \cap H =  \varphi(X) + (\Gamma_0 \cap H) = \varphi(X) + \varphi(-\Lambda_0) = \varphi( X + (-\Lambda_0) )$. This completes the proof of \ref{it:G0vsL0}.
 \end{proof}

The tropical variety $\Trop(X)$ is contained in $\mathcal{N}^1(\op{P}_X)$. Unfortunately, it is impossible to recover $\Trop(X)$ from $\mathcal{N}^1(\op{P}_X)$. In \cite{Fink} Fink gave an example of two distinct tropical surfaces $\Sigma_1$, $\Sigma_2$ in $\mathbb{R}^5/\mathbb{R}$, such that $\Sigma_1+(-\Lambda_0) = \Sigma_2+(-\Lambda_0)$. They are depicted in Figure~\ref{fig:alexample}. 
Computing $Z_{\Sigma_1}=\varphi(\Sigma_1)+\Gamma_0$ and $Z_{\Sigma_2}=\varphi(\Sigma_2)+\Gamma_0$, for example with the package \texttt{Polyhedra} in \texttt{Macaulay2} (\cite{M2}), we see that $Z_{\Sigma_1}\neq Z_{\Sigma_2}$. 
For example, if we give where $\mathbb{R}^{10}/\mathbb{R}$ homogeneous coordinates 
$(p_{01},p_{02}, p_{12}, p_{03}, p_{13}, p_{23}, p_{04}, p_{14}, p_{24}, p_{34})$, for
\begin{equation*}
p_1=(14, 6, 8, 11, 13, 20, 18, 16, 8, 13),\ 
p_2=(17, 12, 11, 14, 13, 23, 24, 19, 14, 16),
\end{equation*}
we have that $p_1\in Z_{\Sigma_1}$ but $p_1\notin Z_{\Sigma_2}$, and  $p_2\in Z_{\Sigma_1}$ but $p_2\notin Z_{\Sigma_2}$. 
Equivalently, consider the tropical line $\Lambda_1$ depicted in Figure~\ref{fig:linealex}. 
Let $L\subset \mathbb{P}^4$ be an algebraic line such that $\Lambda_1= \Trop(L)$ and consider the points $u=(10,8,0,5,13)$ and $v=(6,8,15,21,8)$ in $\Lambda_1$. By the Fundamental Theorem of Tropical Geometry (\cite[Theorem 3.2.5]{Mac}), there exist $x, y \in L$ such that $\op{val}(x)= u, \op{val}(y)= v$. As $L$ is spanned by $x,y$, the Pl\"ucker coordinates of $L$ are $q_{ij} = x_iy_j-x_jy_i$, for $0\leq j < i \leq 4$. The valuation $\op{val}(q_{ij})$ of $q_{ij}$ satisfies $\op{val}(q_{ij}) = \min \{\op{val}(x_iy_j), \op{val}(x_jy_i)\}= \min \{u_i+v_j, u_j+v_i\}$ because we have $u_i+v_j \neq u_j+v_i$ for all $0\leq j < i \leq 4$. This allows us to compute that $\Lambda_1$ is the tropical line corresponding to the point $p_1$. Similarly, the tropical line $\Lambda_2= \{(3,0,3,0,3)\}+\Lambda_1$ that is the translation of $\Lambda_1$ by $(3,0,3,0,3)$ corresponds to the point $p_2 = p_1 + \varphi(3,0,3,0,3) = p_1 + (3,6,3,3,0,3,6,3,6,3)$.
One can check that $\Lambda_1$ intersects $\Sigma_1$ at the point $(6,8,6,11,8)$ but it does not intersect $\Sigma_2$, while $\Lambda_2$ intersects $\Sigma_2$ at the point $(9,8,9,11,11)$ but it does not intersect $\Sigma_1$.
This fact suggests the following conjecture.

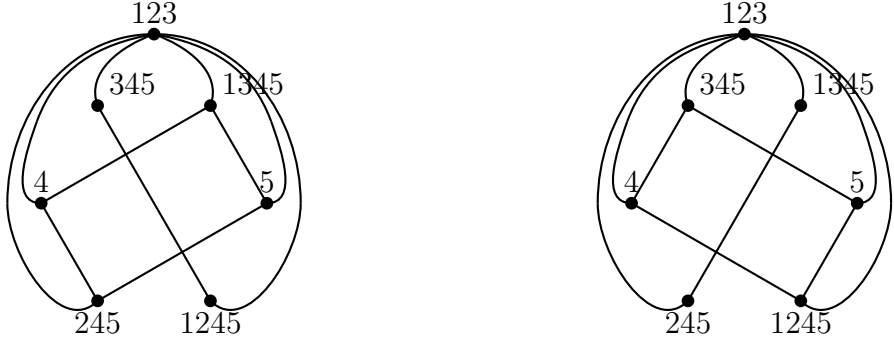
\begin{figure}
\begin{center}
	\begin{minipage}[t]{0.4\textwidth}
	\begin{center}
		\begin{tikzpicture}[style=thick]
			\draw [fill=black](0:1.5cm) circle (2pt) node[above] {$5$};  
			\draw [fill=black](60:1.5cm) circle (2pt) node[above right] {$1345$};  
			\draw [fill=black](120:1.5cm) circle (2pt) node[above right] {$345$};
			\draw [fill=black](180:1.5cm) circle (2pt) node[above] {$4$};
			\draw [fill=black](240:1.5cm) circle (2pt) node[below] {$245$};
			\draw [fill=black](300:1.5cm) circle (2pt) node[below] {$1245$};
			\draw [fill=black](0,2.25cm) circle (2pt) node[above] {$123$};
			\draw (300:1.5cm) -- (120:1.5cm);
			\draw (0:1.5cm) -- (60:1.5cm) -- (180:1.5cm) -- (240:1.5cm) -- cycle;		
			\draw    (0,2.25cm) to[out=-20,in=70] (60:1.5cm);
			\draw    (0,2.25cm) to[out=200,in=110] (120:1.5cm);		
			\draw    (0,2.25cm) to[out=-10,in=110] (35:1.9cm);
			\draw    (35:1.9cm) to[out=290,in=0] (0:1.5cm);		
			\draw    (0,2.25cm) to[out=190,in=70] (145:1.9cm);
			\draw    (145:1.9cm) to[out=250,in=180] (180:1.5cm);
			\draw    (0,2.25cm) to[out=180,in=90] (180:1.95cm);
			\draw    (180:1.95cm) to[out=270,in=225] (240:1.5cm);		
			\draw    (0,2.25cm) to[out=0,in=90] (0:1.95cm);
			\draw    (0:1.95cm) to[out=270,in=-45] (300:1.5cm);
	\end{tikzpicture}
	\end{center}
	\end{minipage}
	\hspace{0.1\textwidth}
	\begin{minipage}[t]{0.4\textwidth}
	\begin{center}
		\begin{tikzpicture}[style=thick]
			\draw [fill=black](0:1.5cm) circle (2pt) node[above] {$5$};  
			\draw [fill=black](60:1.5cm) circle (2pt) node[above right] {$1345$};  
			\draw [fill=black](120:1.5cm) circle (2pt) node[above right] {$345$};
			\draw [fill=black](180:1.5cm) circle (2pt) node[above] {$4$};
			\draw [fill=black](240:1.5cm) circle (2pt) node[below] {$245$};
			\draw [fill=black](300:1.5cm) circle (2pt) node[below] {$1245$};
			\draw [fill=black](0,2.25cm) circle (2pt) node[above] {$123$};
			\draw (60:1.5cm) -- (240:1.5cm);
			\draw (0:1.5cm) -- (120:1.5cm) -- (180:1.5cm) -- (300:1.5cm) -- cycle;		
			\draw    (0,2.25cm) to[out=-20,in=70] (60:1.5cm);
			\draw    (0,2.25cm) to[out=200,in=110] (120:1.5cm);		
			\draw    (0,2.25cm) to[out=-10,in=110] (35:1.9cm);
			\draw    (35:1.9cm) to[out=290,in=0] (0:1.5cm);		
			\draw    (0,2.25cm) to[out=190,in=70] (145:1.9cm);
			\draw    (145:1.9cm) to[out=250,in=180] (180:1.5cm);
			\draw    (0,2.25cm) to[out=180,in=90] (180:1.95cm);
			\draw    (180:1.95cm) to[out=270,in=225] (240:1.5cm);		
			\draw    (0,2.25cm) to[out=0,in=90] (0:1.95cm);
			\draw    (0:1.95cm) to[out=270,in=-45] (300:1.5cm);
	\end{tikzpicture}
	\end{center}
	\end{minipage}
\caption{The tropical varieties of $\Sigma_1$ and $\Sigma_2$ described in \cite{Fink}. A point labeled by $i_1\ldots i_k$ represents a ray generated by $e_{i_1}+\ldots+e_{i_k}$. Edges represent $2$-dimensional cones.}
\label{fig:alexample}
\end{center}
\end{figure}

\begin{figure}
\begin{center}
	\begin{tikzpicture}[style=thick]
		\draw (0,0cm) -- (2,0cm);
		\draw (2,0cm) -- (3,1cm);
		\draw (-1,-1cm) -- (0,0cm);
		\draw (-1,1cm) -- (0,0cm);
		\draw (2,0cm) -- (2,-1cm);
		\draw (3,1cm) -- (4,1cm);
		\draw (3,1cm) -- (3,2cm);
		\draw [fill=black](0,0cm) circle (2pt) node[left] {$(6,8,15,20,8)$};  
		\draw [fill=black](2,0cm) circle (2pt) node[right] {$(6,8,0,5,8)$};
		\draw [fill=black](3,1cm) circle (2pt) node[left] {$(10,8,0,5,12)$};
		
		\draw [fill=none](-1,1cm) circle (0pt) node[left] {$e_2$};  
		\draw [fill=none](-1,-1cm) circle (0pt) node[left] {$e_3$};
		\draw [fill=none](2,-1cm) circle (0pt) node[below] {$e_1$};
		\draw [fill=none](4,1cm) circle (0pt) node[right] {$e_0$};  
		\draw [fill=none](3,2cm) circle (0pt) node[above] {$e_4$};
	\end{tikzpicture}
\end{center}
\caption{The tropical line $\Lambda_1$ corresponding to the point $p_1= (14, 6, 8, 11, 13, 20, 18, 16, 8, 13)\in \op{TrGr}(1,4)$.}
\label{fig:linealex}
\end{figure}
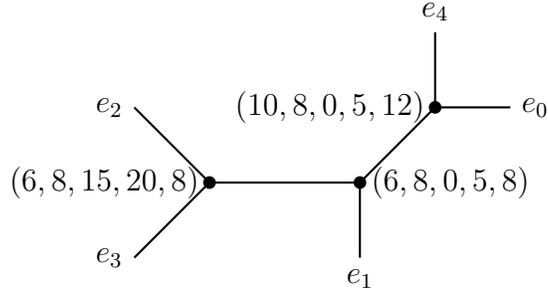

\begin{conjecture}\label{coinj}
Let $\Sigma_1, \Sigma_2 \subset \mathbb{R}^{n+1}/\mathbb{R}$ be pure $(n-k-1)$-dimensional tropical varieties. Suppose $Z_{\Sigma_1}=Z_{\Sigma_2}$ then $\Sigma_1=\Sigma_2$.
\end{conjecture}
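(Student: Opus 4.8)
We outline the strategy by which Conjecture~\ref{coinj} can be attacked, and indicate where the general case currently stalls.

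\textbf{Reduction to a covering statement.} The starting point is the set-theoretic characterization
\begin{equation*}
p\in|\Sigma| \iff \Gamma_p\subseteq |Z_\Sigma|.
\end{equation*}
The implication ``$\Rightarrow$'' is immediate, since $\Gamma_p=\varphi(p)+\Gamma_0\subseteq\varphi(\Sigma)+\Gamma_0=Z_\Sigma$. For ``$\Leftarrow$'', translating by $\varphi(p)$ as in Lemma~\ref{lem:trchset} we may assume $p=0$; then, unwinding the definitions exactly as in the derivation of \eqref{seteq1}--\eqref{seteq2}, the inclusion $\Gamma_0\subseteq\varphi(\Sigma)+\Gamma_0$ says precisely that \emph{every} tropicalized linear $k$-plane through the origin meets $\Sigma$. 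So one must produce, whenever $0\notin|\Sigma|$, a point $q\in\Gamma_0$ with $\Lambda_q\cap\Sigma=\emptyset$. When $\Sigma=\Trop(Y)$ is realizable this is a classical dimension count: the $k$-planes through $[1:\cdots:1]$ form a family of dimension $k(n-k)$, whereas those that additionally meet $Y$ sweep out a subvariety of dimension at most $(k-1)(n-k)+(n-k-1)=k(n-k)-1$, so a generic such $k$-plane avoids $Y$, and its tropicalization lies in $\Gamma_0$ because every point of $\op{TrGr}(k,n)$ is realizable. (For non-realizable $\Sigma$ one would replace this by a purely tropical argument, e.g. a stable-intersection dimension count.) Granting this, $Z_{\Sigma_1}=Z_{\Sigma_2}$ forces $|\Sigma_1|=|\Sigma_2|$, and it remains to match multiplicities.

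\textbf{Matching multiplicities.} Put $\Delta=\Sigma_1-\Sigma_2\in\op{A}^k(\IR^{n+1}/\IR)$; by the previous step $\Delta$ is supported inside the known complex $|\Sigma_1|$, and $Z_\Delta=0$ by linearity of \eqref{chowmap}. Fix a maximal cone $\sigma$ of $\Sigma_1$, a maximal cone $\gamma_0$ of $\Gamma_0$, and generic interior points $p\in\sigma$, $\gamma\in\gamma_0$. One checks that $\dim(\varphi(\sigma)+\gamma_0)=(n-k-1)+k(n-k)$ equals the dimension of the Chow hypersurface, so by \eqref{eq:minkw} the Minkowski sum is full-dimensional there; since $\varphi$ is injective on $\IR^{n+1}/\IR$ and the sum is locally transverse at $w=\varphi(p)+\gamma$, the contribution of the pair $(\sigma,\gamma_0)$ to $m_{Z_\Delta}(w)$ is $m_\Delta(\sigma)$ times a nonzero lattice index times $m_{\Gamma_0}(\gamma_0)>0$. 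If $w$ is covered by the Minkowski sum in no other way, then $m_{Z_\Delta}(w)=0$ forces $m_\Delta(\sigma)=0$, and letting $\sigma$ range over all maximal cones yields $\Delta=0$, i.e. $\Sigma_1=\Sigma_2$.

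\textbf{The obstacle, and the curve case.} The hard part is exactly the ``no other covering'' hypothesis: it fails if $\varphi(\sigma)+\gamma_0$ shares a full-dimensional region with $\varphi(\sigma')+\gamma'$ for some other pair $(\sigma',\gamma')$, in which case only the \emph{sum} of the lattice-weighted contributions is constrained and cancellations are a priori possible. This is precisely the mechanism behind Fink's non-injectivity of $\Sigma\mapsto\Sigma+(-\Lambda_0)$ in \eqref{finkthm}; the reason to hope $Z_\Sigma$ does better is that $\Gamma_0$ lives in the larger space $\IR^N/\IR$ and, inside $\op{TrGr}(k,n)$, is a far finer object than $-\Lambda_0$, so such overlaps should be avoidable at a well-chosen extreme cone $\sigma$. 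Making this precise in general requires classifying the coincidences $\varphi(\sigma)+\gamma_0=\varphi(\sigma')+\gamma'$ between cones of $\Gamma_0$ and cones of an arbitrary balanced complex, which we do not know how to control. When $X$ is a curve in $\IP^3$ one has $k=1$, $n=3$ and $\op{TrGr}(1,3)=\op{TrGr}(2,4)$ is the cone over the Petersen graph, for which $\Gamma_0$ — the tropical lines through the origin of $\IP^3$ — has an explicit finite description; combining this with the classification of tropical lines in $\IP^3$ and of one-dimensional balanced fans in $\IR^4/\IR$, the overlap analysis can be carried out by hand, proving the conjecture in that case. Extending this combinatorial control to all $(k,n)$ is what remains open.
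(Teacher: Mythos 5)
You are addressing a statement that the paper itself leaves as a conjecture: it is proved there only for curves in $\IP^3$ supported on a fan (Theorem~\ref{curveinj}), and your text is likewise an outline rather than a proof, so neither settles the general case. The concrete gap is in your first step. The claimed equivalence $p\in|\Sigma|\iff\Gamma_p\subseteq|Z_\Sigma|$ is false, and the paper exhibits a counterexample: in Example~\ref{eg:contro}, for $p=(0,1,0,0)$ one has $\Gamma_p\subseteq|Z_{\Sigma_2}|=|Z_{\Sigma_1}|$ while $p\notin|\Sigma_1|$. Correspondingly, equality of $Z_{\Sigma_1}$ and $Z_{\Sigma_2}$ as \emph{sets} does not force $|\Sigma_1|=|\Sigma_2|$ (the paper states this explicitly just before Example~\ref{eg:contro}), so your reduction to a covering statement collapses. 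The realizable-case dimension count offered to justify the ``$\Leftarrow$'' direction conflates classical and tropical intersection: producing one $k$-plane $L$ through $[1:\cdots:1]$ with $L\cap Y=\emptyset$ does not give $\Trop(L)\cap\Trop(Y)=\emptyset$, since by \eqref{seteq1} the latter already fails if \emph{some other} realization $L'$ of the same tropicalized plane meets $Y$; the fibres of tropicalization on $\mathscr{G}_u$ are large, and the example shows that every tropicalized line through a suitable point off $|\Sigma_1|$ can still meet $|\Sigma_1|$.

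Your multiplicity-matching step is, in spirit, what the paper actually does in the curve case, but the proof of Theorem~\ref{curveinj} is organized precisely so as not to need the (false) support statement. It puts $\Sigma$ and $\Sigma'$ on a common set of rays and splits into two cases: for each ray $\rho_i$ not contained in $\Lambda=\Lambda_0\cup(-\Lambda_0)$ it constructs an explicit full-dimensional subcone $\tau_{\alpha\beta}$ of $\varphi(\rho_i)+\sigma$ that is met in full dimension by no other pair $(\rho,\sigma')$, which pins down $m_i=m_i'$ via \eqref{eq:minkw}; the rays inside $\Lambda$ are then handled by the separate linear-algebra statement Lemma~\ref{injcurvlem} applied to the difference $\Sigma-\Sigma'$, using Lemma~\ref{lembadlocus} to identify $\Lambda$ as the locus where the overlap analysis breaks down. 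Remark~\ref{os:setinj} records that the support of $Z_\Sigma$ only determines the support of $\Sigma$ away from $\Lambda$, which is exactly the phenomenon your Example-refuting step misses. To salvage your outline, replace the set-theoretic reduction by this two-case split (generic rays via an overlap analysis, degenerate rays via linearity and balancing); the overlap classification is the part that is currently only known for $(k,n)=(1,3)$, and your proposal does not supply it.
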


The conjecture is false if we state it in term of equalities of sets rather than tropical varieties. The following example shows two different tropical varieties $\Sigma_1$ and $\Sigma_2$ with different support sets $\left|\Sigma_1\right| \neq \left| \Sigma_2 \right|$, such that $Z_{\Sigma_1}$ and $Z_{\Sigma_2}$ have the same support set and only differ in the multiplicities. 

\begin{example}\label{eg:contro}
Let $\Sigma_1$ be the tropical curve in $\mathbb{R}^4/\mathbb{R}$ with rays 
\begin{equation*}
\begin{array}{lll}
	\{\rho_1= \op{pos}( 1, -1, -1, 1 ), & \rho_2= \op{pos}( 1, -1, 1, -1 ), & \rho_3= \op{pos}( -1, -1, 1, 1 ),  \\
	  \rho_4= \op{pos}( -1, 1, 0, 0 ), & \rho_5= \op{pos}( 0, 1, -1, 0 ), & \rho_6= \op{pos}( 0, 1, 0, -1 ), \\
		\rho_7= \op{pos}( 0, -1, 0, 0 ), & \rho_8= \op{pos}( 1, 1, 0, 0 ), & \rho_9= \op{pos}( 0, 1, 1, 1 ) \}, 
\end{array}
\end{equation*}
and multiplicities
\begin{multline*}
	\{m_1= 1,
		m_2= 1,
		m_3= 1,
		m_4= 1,
		m_5= 1,
		m_6= 1,
		m_7= 1,
		m_8= 1,
		m_9= 1
	\}.
\end{multline*}
Let $\Sigma_2$ be the tropical curve with rays 
\begin{align*}
	\{\rho_1,
		\rho_2,
		\rho_3,
		\rho_4,
		\rho_5,
		\rho_6,
		\rho_6,
		\rho_7,
		\rho_8,
		\rho_9,
		\rho_{10}= \op{pos}( 0,1,0,0 )
	\},
\end{align*}
and multiplicities 
\begin{multline*}
	\{m_1= 1,
		m_2= 1,
		m_3= 1,
		m_4= 1,
		m_5= 1,
		m_6= 1,
		m_7= 2,
		m_8= 1,
		m_9= 1,
		m_{10}= 1
	\}.
\end{multline*}
Then the support sets of $Z_{\Sigma_1}=\varphi(\Sigma_1) + \Gamma_0$ and $Z_{\Sigma_2}=\varphi(\Sigma_2) + \Gamma_0$ are equal, despite $\Sigma_1$ and $\Sigma_2$ have different support.
\end{example}

\begin{remark}
The difference in the support of $\Sigma_1$ and $\Sigma_2$ is in the ray $\op{pos}(0,1,0,0)$. This is not a coincidence and we will show in Remark~\ref{os:setinj} that, in the case of curves in $\mathbb{R}^4/\mathbb{R}$, the support of $Z_\Sigma$ determines the support of $\Sigma$ outside $\Lambda_0 \cup (-\Lambda_0)$.
\end{remark}

In classical Algebraic Geometry the variety $X$ can be deduced from $Z_X$ as a set via the equality
\begin{equation*}
X=\{ x\in \mathbb{P}^n\mid \mathscr{G}_x\subset Z_X\}.
\end{equation*}
Example~\ref{eg:contro} also shows that this does not happen in Tropical Geometry. Set $p=(0,1,0,0)$. We have that $p\in \Sigma_2$, so that $\Gamma_p \subset \left| Z_{\Sigma_2}\right|= \left| Z_{\Sigma_1} \right|$. However $p\notin \Sigma_1$.

\subsection{Curves in space}

In the last part of this section we prove Conjecture~\ref{coinj} for space curves whose support is a fan. This property is not uncommon: if $K$ is a field extension of a field with trivial valuation $\textbf{k}$, then all the tropicalizations $\Trop(X)$ of varieties $X$ defined over $\textbf{k}$ are supported on a fan. This is the case, for example, of varieties defined over $\mathbb{C}$, when we consider the field $K=\ICt$.

We denote by $e_0, e_1,e_2, e_3$ the images of the vectors of the standard basis of $\mathbb{R}^4$ in $\mathbb{R}^4/\mathbb{R}$. The Grassmannian $\op{TrGr}(1,3)$ is embedded in $\mathbb{R}^6/\mathbb{R}$. We denote by $e_{01},e_{02},e_{12},e_{03},e_{13}, e_{23}$ the images of the standard basis of $\mathbb{R}^6$, and finally we write $f_i=\varphi(e_i)=\sum_j e_{i,j}$ for $i=0,1,2, 3$.
We denote by $\Lambda_0$ the tropical standard line in $\mathbb{R}^4/\mathbb{R}$. This is the one-dimensional fan with rays 
\begin{equation*}
	\{ \op{pos}( e_0 ), \op{pos}( e_1 ), \op{pos}( e_2 ), \op{pos}( e_3 ) \}.
\end{equation*} 
We denote by $\Lambda$ the set $\Lambda = \Lambda_0 \cup (-\Lambda_0)$.

The tropical variety $\Gamma_0\subset \op{TrGr}(1,3)$ is depicted in Figure~\ref{gamma013}. It can be computed, for example with \texttt{gfan} (\cite{gfan}) as the tropicalization of the variety $\mathscr{G}_u\subset \op{Gr}(1,3)$ of lines through the origin:
\begin{multline*}
\mathscr{G}_u=\V(p_{03}p_{12}-p_{02}p_{13}+p_{01}p_{23}, p_{12}-p_{13}+p_{23}, p_{02}-p_{03}+p_{23}, p_{01}-p_{03}+p_{13},\\ p_{01}-p_{02}+p_{12}).
\end{multline*}

\begin{figure}\label{gamma013}
\begin{center}
	\begin{tikzpicture}[style=thick]
		\draw (18:2cm) -- (90:2cm) -- (162:2cm) -- (234:2cm) --
		(306:2cm) -- cycle;
		\draw (18:1cm) -- (162:1cm) -- (306:1cm) -- (90:1cm) --
		(234:1cm) -- cycle;
		\draw (18:1cm) -- (18:2cm);
		\draw [fill=black](18:2cm) circle (2pt) node[above right] {$e_{23}$};  
		\draw [fill=black](18:1cm) circle (2pt) node[above] {$-f_0$};
		\draw (90:1cm) -- (90:2cm);
		\draw [fill=black](90:2cm) circle (2pt) node[above] {$e_{01}$};  
		\draw [fill=black](90:1cm) circle (2pt) node[right] {$-f_3$};
		\draw (162:1cm) -- (162:2cm);
		\draw [fill=black](162:2cm) circle (2pt) node[above left]  {$-f_2$};  
		\draw [fill=black](162:1cm) circle (2pt) node[above] {$e_{13}$};
		\draw (234:1cm) -- (234:2cm);
		\draw [fill=black](234:2cm) circle (2pt) node[below left] {$e_{03}$};  
		\draw [fill=black](234:1cm) circle (2pt) node[left] {$e_{12}$};
		\draw (306:1cm) -- (306:2cm);
		\draw [fill=black](306:2cm) circle (2pt) node[below right] {$-f_1$};  
		\draw [fill=black](306:1cm) circle (2pt) node[right] {$e_{02}$};
	\end{tikzpicture}
\end{center}
\caption{The tropical variety $\Gamma_0$ in $\mathbb{R}^6/\mathbb{R}$. Points in the picture represent rays in $\Gamma_0$ and edges in the picture represent $2$-dimensional cones in $\Gamma_0$. Each point is labeled with a generator of the corresponding ray. All maximal cones have multiplicity one. }
\label{fig:gamma0}
\end{figure}
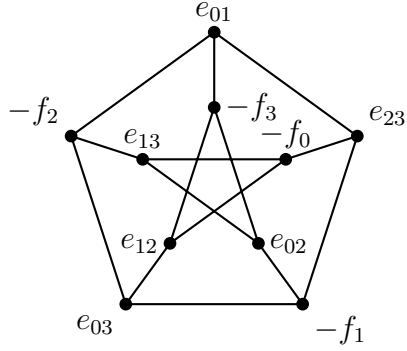

\begin{lemma}\label{lembadlocus}
	Let $H_{ij}$ be the plane in $\mathbb{R}^4/\mathbb{R}$ generated by $e_i$ and $e_j$, $0 \leq i < j \leq 3$. Then
	\begin{equation*}
		\Lambda = (H_{01}  \cup H_{23}) \cap (H_{02}  \cup H_{13}) \cap (H_{03}  \cup H_{12}).
	\end{equation*}
	\end{lemma}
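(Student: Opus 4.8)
The plan is to reduce the statement to an elementary coordinate criterion for membership in the planes $H_{ij}$. Represent a point $x\in\mathbb{R}^4/\mathbb{R}$ by coordinates $(x_0,x_1,x_2,x_3)$, well defined up to adding a multiple of $(1,1,1,1)$. First I would record the dictionary: if $\{k,l\}=\{0,1,2,3\}\setminus\{i,j\}$, then $x\in H_{ij}$ if and only if $x_k=x_l$. Indeed $x\in H_{ij}=\mathbb{R}e_i+\mathbb{R}e_j$ means $x-(s e_i+t e_j)\in\mathbb{R}(1,1,1,1)$ for some $s,t\in\mathbb{R}$, which forces $x_k=x_l$; conversely $s=x_i-x_k$ and $t=x_j-x_k$ exhibit such a representation. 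In the same spirit, since $\Lambda_0=\bigcup_{i=0}^3\op{pos}(e_i)$ we have $\Lambda=\Lambda_0\cup(-\Lambda_0)=\bigcup_{i=0}^3\mathbb{R}e_i$, and $\mathbb{R}e_i$ is precisely the set of points whose three coordinates other than $x_i$ all agree; hence $\Lambda$ is exactly the set of $x$ such that three of the four coordinates $x_0,x_1,x_2,x_3$ are equal.

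With these dictionaries, the right-hand side becomes a purely combinatorial condition indexed by the three ways of partitioning $\{0,1,2,3\}$ into two pairs. Namely, $H_{01}\cup H_{23}$ is cut out by ``$x_2=x_3$ or $x_0=x_1$'', $H_{02}\cup H_{13}$ by ``$x_1=x_3$ or $x_0=x_2$'', and $H_{03}\cup H_{12}$ by ``$x_1=x_2$ or $x_0=x_3$''. Thus $x$ lies in the right-hand side if and only if, for each of the three $2+2$ partitions of the index set, at least one of the two blocks is monochromatic, meaning its two indices carry equal coordinates of $x$. So the Lemma is equivalent to the combinatorial assertion: a tuple $(x_0,x_1,x_2,x_3)$ has a monochromatic block in each of the three $2+2$ partitions if and only if three of its entries are equal.

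I would finish by proving this equivalence. The ``if'' direction is immediate: if three entries are equal, then in each $2+2$ partition one of the two blocks is contained in that equal triple and hence monochromatic. For ``only if'', partition $\{0,1,2,3\}$ according to the equality classes of the $x_i$; if no class has size at least $3$, the list of class sizes is $(1,1,1,1)$, $(2,1,1)$, or $(2,2)$. In the case $(1,1,1,1)$ no pair is monochromatic, so every one of the three conditions fails. In the cases $(2,1,1)$ and $(2,2)$ there is, respectively, a unique monochromatic pair $\{a,b\}$, or two disjoint monochromatic pairs $\{a,b\}$ and $\{c,d\}$ carrying distinct values; choosing a $2+2$ partition that does not have $\{a,b\}$ as a block, say $\big\{\{a,c\},\{b,d\}\big\}$, one checks directly that neither of its blocks is monochromatic, so that condition fails. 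Hence a point of the right-hand side must have three equal coordinates, i.e.\ lie in $\Lambda$, and the reverse inclusion was already observed.

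None of this is deep; the only points requiring care are the complement bookkeeping in the coordinate criterion for $H_{ij}$ and the $(2,2)$ case. As an alternative, one can distribute the triple intersection over the unions into the eight intersections $H_{i_1j_1}\cap H_{i_2j_2}\cap H_{i_3j_3}$ and evaluate each, using $H_{ij}\cap H_{ik}=\mathbb{R}e_i$ when $j\neq k$ and $H_{ij}\cap H_{kl}=\{0\}$ when $\{i,j\}$ and $\{k,l\}$ are disjoint; each of the eight is then $\{0\}$ or some $\mathbb{R}e_i$, all contained in $\Lambda$, and four of them recover the lines $\mathbb{R}e_i=\op{pos}(e_i)\cup\op{pos}(-e_i)$, giving both inclusions at once.
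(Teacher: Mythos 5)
Your main argument is correct, and it takes a genuinely different route from the paper. The paper distributes the triple intersection over the three unions into eight terms $H_{a}\cap H_{b}\cap H_{c}$ (one plane from each complementary pair) and evaluates each using the pairwise intersections of the planes, obtaining $\{0\}$ four times and $\op{span}(e_0),\ldots,\op{span}(e_3)$ once each. You instead translate everything into coordinates: $x\in H_{ij}$ iff the two complementary coordinates agree, $x\in\Lambda$ iff three coordinates agree, and the lemma becomes the finite combinatorial statement that a $4$-tuple has a monochromatic block in each of the three $2{+}2$ partitions iff three of its entries coincide; your case analysis on equality-class sizes $(1,1,1,1)$, $(2,1,1)$, $(2,2)$ is complete and correct. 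Your version buys an explicit, checkable characterization of both sides and avoids computing intersections of planes in the quotient $\mathbb{R}^4/\mathbb{R}$; the paper's version is shorter once one knows the pairwise intersections. One correction to your closing remark: for disjoint index pairs, $H_{ij}\cap H_{kl}$ is \emph{not} $\{0\}$ but the line $\op{span}(e_i+e_j)=\op{span}(e_k+e_l)$, since $e_i+e_j=-(e_k+e_l)$ in $\mathbb{R}^4/\mathbb{R}$ (equivalently, by your own criterion it is the locus $x_i=x_j,\ x_k=x_l$, one-dimensional in the quotient). This error is harmless for the eight-term expansion, because two complementary planes never occur in the same term, but as stated that auxiliary claim is false and should be replaced by the paper's version.
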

\begin{proof}
	We have $H_{ij} \cap H_{kl}= \op{span}(e_i+e_j)= \op{span}(e_k+e_l)$ if $\{i,j\}$ and $\{k,l\}$ are disjoint, and $H_{ij} \cap H_{ik} = \op{span} {e_i}$ otherwise, moreover any two such lines only intersect in the origin. If we expand the right hand side we obtain
	\begin{equation*}
		(0) \cup \op{span}(e_0) \cup \op{span}(e_1) \cup \op{span}(e_2) \cup \op{span}(e_3) = \Lambda.
	\end{equation*}
\end{proof}

As in the last part of Section~\ref{sec:tropch}, we denote by $\op{A}^{0}(\Lambda)$ the set of one-dimensional balanced $\mathbb{Q}$-weighted $\Gamma_{\op{val}}$-rational polyhedral complexes in $\Lambda$ and by $\op{A}^1(\op{TrGr}(k,n))$ the set of pure codimension-one balanced $\mathbb{Q}$-weighted $\Gamma_{\op{val}}$-rational polyhedral complexes in $\op{TrGr}(1,3)$. These are vector spaces over $\mathbb{Q}$, and we have a linear map
\begin{equation}\label{ZZETA}
\begin{array}{cccc}
\op{Z}: & \op{A}^0(\Lambda)& \longrightarrow & \op{A}^1(\op{TrGr}(1,3)) \\
 & \Sigma & \longmapsto & Z_\Sigma=\varphi(\Sigma)+\Gamma_0.
\end{array}
\end{equation}

\begin{lemma}\label{injcurvlem}
	The linear map \eqref{ZZETA} is injective.
\end{lemma}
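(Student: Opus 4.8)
The plan is to show that the kernel of the linear map $\op{Z}$ in \eqref{ZZETA} is trivial by a direct local analysis. Since $\op{A}^0(\Lambda)$ is a finite-dimensional $\mathbb{Q}$-vector space spanned by the (weighted, balanced) one-dimensional fans supported on the finitely many rays of $\Lambda = \Lambda_0 \cup (-\Lambda_0)$, namely the rays $\op{pos}(e_i)$ and $\op{pos}(-e_i)$ for $i=0,1,2,3$, a general $\Sigma \in \op{A}^0(\Lambda)$ is determined by $8$ weights $w_0^+,\dots,w_3^+$ (on the positive rays) and $w_0^-,\dots,w_3^-$ (on the negative rays), subject to the single balancing condition $\sum_i w_i^+ e_i - \sum_i w_i^- e_i = 0$ in $\mathbb{R}^4/\mathbb{R}$, i.e. $(w_0^+ - w_0^-, \dots, w_3^+ - w_3^-)$ is a multiple of $(1,1,1,1)$. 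First I would fix this description and record that $\dim_{\mathbb{Q}} \op{A}^0(\Lambda) = 8 - 3 = 5$ (or however the balancing cuts it down; the exact count is not needed, only that we understand the generators).

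The key idea is to read off the weights of $\Sigma$ from the weights of $Z_\Sigma = \varphi(\Sigma) + \Gamma_0$ on well-chosen maximal cones. For a single ray $\rho = \op{pos}(v)$ with weight $w$, the contribution $\varphi(\rho) + \Gamma_0$ is the translate-and-sweep of the two-dimensional fan $\Gamma_0$ (Figure~\ref{fig:gamma0}) along the ray $\op{pos}(\varphi(v))$, with multiplicities governed by the Minkowski-sum weight formula \eqref{eq:minkw}; the crucial point is that different rays $v$ of $\Lambda$ produce ``new'' two-dimensional cones in $\varphi(\rho)+\Gamma_0$ pointing in the direction $\varphi(v)$, and these directions $\varphi(e_i) = f_i$, $\varphi(-e_i) = -f_i$ are pairwise distinct and are not already cone-directions of $\Gamma_0$ alone. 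So I would identify, for each generator direction $v$ of $\Lambda$, a maximal cone $\sigma_v$ of $\op{TrGr}(1,3)$ lying in $\varphi(\op{pos}(v)) + \Gamma_0$ whose multiplicity in $Z_\Sigma$ is (up to a nonzero lattice-index constant) exactly $w$, and is unaffected by the other rays. Summing over all rays, the weight of $Z_\Sigma$ on $\sigma_v$ is a nonzero constant times $w$; hence $Z_\Sigma = 0$ forces every $w = 0$, i.e. $\Sigma = 0$.

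The main obstacle is the bookkeeping in the previous paragraph: one must actually verify that the ``detector cone'' $\sigma_v$ receives a contribution from $\varphi(\op{pos}(v)) + \Gamma_0$ and from \emph{no other} $\varphi(\op{pos}(v')) + \Gamma_0$, and that the relevant Minkowski multiplicity \eqref{eq:minkw} is nonzero (the dimension adds correctly and the lattice index is finite). This is where the explicit combinatorics of $\Gamma_0$ in $\mathbb{R}^6/\mathbb{R}$ enters — in particular Lemma~\ref{lembadlocus}, which pins down $\Lambda$ as the locus where several coordinate planes meet, will be used to separate the eight ray-directions: the $f_i$ and $-f_i$ are the apexes of new rays/cones visible in Figure~\ref{fig:gamma0} (the labels $-f_0,\dots,-f_3$ already appear there as rays of $\Gamma_0$), so sweeping $\Gamma_0$ in direction $f_i$ or $-f_i$ genuinely enlarges the complex in a controlled, ray-specific way. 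I would carry this out by going through the $8$ cases (or, using the symmetry of $\op{TrGr}(1,3)$ under the $S_4$ action on $\{0,1,2,3\}$ together with the duality $\Lambda_0 \leftrightarrow -\Lambda_0$, essentially $2$ cases up to symmetry), and in each case exhibit the detector cone and compute its multiplicity, finishing with the linear-algebra conclusion that injectivity of $\op{Z}$ follows from the weights $(w_i^{\pm})$ being individually recoverable.
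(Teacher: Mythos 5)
Your proposal follows essentially the same route as the paper: the paper also reduces to reading off the weight of each of the eight rays of $\Lambda$ individually, by extending $\op{Z}$ to the space of (possibly unbalanced) weighted fans on $\Lambda$ with basis the single-ray fans $F_1,\dots,F_8$, and then exhibiting for each ray $\rho_i$ a ``detector'' cone $\sigma_i\in\Gamma_0$ (explicitly, $\op{pos}(e_{01},-f_3)$ or $\op{pos}(e_{01},-f_2)$) such that $\varphi(\rho_i)+\sigma_i$ is three-dimensional and its interior meets no other $\varphi(\rho_j)+\Gamma_0$, so that \eqref{eq:minkw} recovers $a_i$ from the multiplicity of that cone in $\op{Z}'(\Sigma)$. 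The combinatorial verification you defer as ``bookkeeping'' is exactly the content of the paper's asserted Minkowski-sum computation, so your plan is correct and matches the paper's argument.
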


\begin{proof}
The space $\op{\hat{A}}_{\text{unbal}}^0(\Lambda)$ of (possibly not balanced) $\Gamma_{\op{val}}$-rational fans in $\Lambda$ of dimension one is an eight-dimensional vector space, whose elements are given by a choice of weights $m(\rho)$ on each of the eight rays $\rho$ of $\Lambda$. As every balanced polyhedral complex contained in $\Lambda$ is a fan, $\op{\hat{A}}_{\text{unbal}}^0(\Lambda)$ contains $\op{A}^0(\Lambda)$ as a subspace.
We denote the rays of $\Lambda$ as $\rho_1=\op{pos}(e_0),\ \rho_2=\op{pos}(e_1), \ldots, \rho_8=\op{pos}(-e_4)$.  
A natural basis of $\op{\hat{A}}_{\text{unbal}}^0(\Lambda)$ is given by the eight fans $F_1, \ldots, F_8$, where $F_i$ is the fan that has weight one on $\rho_i$ and $0$ on every other ray $\rho_j$, $j\neq i$. An element $\Sigma \in \op{\hat{A}}_{\text{unbal}}^0(\Lambda)$ can be written as $\Sigma= \sum a_i F_i$, where $a_i$ is the multiplicity of the ray $\rho_i$ in $\Sigma$.
	We denote by $\op{A}^1_{\text{unbal}}(\op{TrGr}(1,3))$ the space of (possibly not balanced) $\mathbb{Q}$-weighted $\Gamma_{\op{val}}$-rational polyhedral complexes of dimension $3$ contained in $\op{TrGr}(1,3)$. The vector space $\op{A}^1_{\text{unbal}}(\op{TrGr}(1,3))$ contains $\op{A}^1(\op{TrGr}(1,3))$ as a subspace. We can extend the map \eqref{ZZETA} to the following linear map,
	\begin{equation*}
		\begin{array}{cccc}
			\op{Z}': & \op{\hat{A}}^0_{\text{unbal}}(\Lambda)& \longrightarrow & \op{A}^1_{\text{unbal}}(\op{TrGr}(1,3))\\
							&	\Sigma													& \longmapsto & \varphi(\Sigma)+ \Gamma_0.
		\end{array}
	\end{equation*}
	We claim that $\op{Z}'$ is injective. By computing all the relevant Minkowski sums, we can see that for every fan $F_i$ in the given basis of $\op{\hat{A}}_{\text{unbal}}^0(\Lambda)$ we can find a cone $\sigma_i \in \Gamma_0$ such that $\varphi(\rho_i) + \sigma_i$ is a three-dimensional cone whose interior does not intersect the support of any other fan $\varphi(\rho_j) + \Gamma_0$ for $j\neq i$. For example we can choose 
	\begin{equation*}
	\sigma_1=\sigma_2=\sigma_5=\sigma_6=\op{pos}(e_{01}, -f_3), \  \sigma_3=\sigma_4=\sigma_7=\sigma_8=\op{pos}(e_{01}, -f_2).
	\end{equation*}
	Now consider an element $\Sigma = \sum a_i F_i\in \op{\hat{A}}_{\text{unbal}}^0(\Lambda)$, and suppose that $\op{Z}'(\Sigma) = 0$. For $i=1,\ldots 8$, the multiplicity of $\varphi(\rho_i)+ \sigma_i$ in $\op{Z}'(\Sigma)$ is given by Formula \eqref{eq:minkw} as $a_i[N_{\rho_i + \sigma_i} : N_{\rho_i} + N_{\sigma_i}]$, as every cone in $\Gamma_0$ has multiplicity one. As $\op{Z}'(\Sigma) = 0$ the multiplicity of $\varphi(\rho_i)+ \sigma_i$ in $\op{Z}'(\Sigma)$ must be $0$, and this forces $a_i=0$ as desired.
	\end{proof}

\begin{figure}
\begin{center}
	\begin{tikzpicture}[style=thick]
		\draw (1,1) -- (0,0);
		\draw (1,1cm) -- (0,2cm);
		\draw (5,1cm) -- (6,0cm);
		\draw (5,1cm) -- (6,2cm);
		\draw (5,1cm) -- (1,1cm);
		\draw [fill=black](1,1cm) circle (2pt) node[left] {$v+(a,a,0,0)$};  
		\draw [fill=black](3.6,1cm) circle (2pt) node[above] {$v$};
		\draw [fill=black](5,1cm) circle (2pt) node[right] {$v+(0,0,b,b)$};
		\draw [fill=black](0,0cm) circle (0pt) node[below left] {$e_0$};
		\draw [fill=black](0,2cm) circle (0pt) node[above left] {$e_1$};
		\draw [fill=black](6,0cm) circle (0pt) node[below right] {$e_2$};
		\draw [fill=black](6,2cm) circle (0pt) node[above right] {$e_3$};
	\end{tikzpicture}
\end{center}
\caption{The tropical line corresponding to the point $\varphi(v)+ a e_{01} + b e_{23}$.}
\label{fig:lineab}
\end{figure}
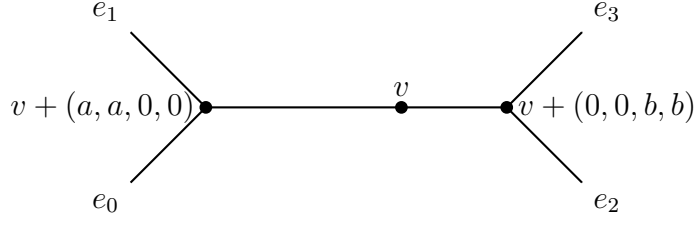

\begin{theorem}\label{curveinj}
	Let $\Sigma, \Sigma'\in \mathbb{R}^4/\mathbb{R}$ be tropical curves whose support is a fan. Suppose $Z_{\Sigma}=Z_{\Sigma'}$. Then $\Sigma=\Sigma'$.
	\end{theorem}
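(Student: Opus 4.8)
The plan is to reduce, using the linearity of $Z$, to a single vanishing statement, and then to treat separately the rays of a curve lying inside and outside $\Lambda = \Lambda_0 \cup (-\Lambda_0)$. Since $Z$ is linear (it extends to the linear map $Z'$ of the previous subsection), $Z_\Sigma = Z_{\Sigma'}$ gives $Z'(\Delta) = 0$ for $\Delta := \Sigma - \Sigma'$, which is a balanced $\mathbb{Q}$-weighted one-dimensional $\Gamma_{\op{val}}$-rational fan. It therefore suffices to show: if $\Delta$ is such a fan with $Z_\Delta = 0$, then $\Delta = 0$. I will first show that $\Delta$ is supported on $\Lambda$, and then invoke Lemma~\ref{injcurvlem}.

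To see that $\Delta$ is supported on $\Lambda$, fix a ray $\op{pos}(v)$ of $\Delta$ with $v \notin \Lambda$; I want its weight to be zero. By Lemma~\ref{lembadlocus} there is a splitting $\{i,j\}\mid\{k,l\}$ of $\{0,1,2,3\}$ with $v \notin H_{ij} \cup H_{kl}$. Set $\tau^\ast = \op{pos}(e_{ij}, e_{kl})$, one of the three ``matching'' maximal cones of $\Gamma_0$ visible in Figure~\ref{fig:gamma0}, and $C = \op{pos}(\varphi(v)) + \tau^\ast$. Since $v \notin H_{ij} \cap H_{kl}$ we have $\varphi(v) \notin \op{span}(e_{ij}, e_{kl})$, so $C$ is genuinely three-dimensional, and by formula~\eqref{eq:minkw} it occurs as a maximal cone of $Z_{\op{pos}(v)} = \varphi(\op{pos}(v)) + \Gamma_0$ with multiplicity equal to a positive integer multiple of the weight of $\op{pos}(v)$ in $\Delta$.

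The heart of the argument is to produce a point $x$ in the relative interior of $C$ that lies outside $Z_{\op{pos}(w)} = \varphi(\op{pos}(w)) + \Gamma_0$ for every ray $\op{pos}(w) \neq \op{pos}(v)$ of $\Delta$; then the multiplicity of $Z_\Delta = \sum_w m_w\, Z_{\op{pos}(w)}$ at $x$ is exactly the contribution computed above, and $Z_\Delta = 0$ forces the weight of $\op{pos}(v)$ to vanish. For this one analyses when $Z_{\op{pos}(w)} \cap C$ can be three-dimensional. A maximal cone $\op{pos}(\varphi(w), \tau)$ of $Z_{\op{pos}(w)}$ meets $C$ in dimension three only if $\op{span}(\tau) \subseteq V_C := \op{span}(\varphi(v), e_{ij}, e_{kl})$; since the only maximal cones of $\Gamma_0$ spanned by two of the coordinate directions $e_{pq}$ are the three matching cones, and since $\varphi^{-1}(V_C) = \op{span}(v,\, e_i + e_j)$, the only ``dangerous'' directions are $w = \lambda v + \mu(e_i + e_j)$ with $\lambda > 0$; for such $w$, writing points of $C$ as $\alpha\varphi(v) + \beta e_{ij} + \gamma e_{kl}$ with $\alpha,\beta,\gamma \geq 0$, the overlap $Z_{\op{pos}(w)} \cap C$ is the proper subcone cut out by the single inequality $\beta \geq \alpha\mu/\lambda$ or $\gamma \geq -\alpha\mu/\lambda$. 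All of these inequalities open away from the ray $\op{pos}(\varphi(v))$ in a consistent direction, so a point $x = \varphi(v) + \beta_0 e_{ij} + \gamma_0 e_{kl}$ with sufficiently small positive $\beta_0, \gamma_0$ avoids all of the finitely many dangerous subcones, and being generic in the resulting open region it also avoids the at-most-two-dimensional overlaps coming from all other cones. The one degenerate situation to rule out is that $V_C$ contains some $-f_m$ and hence an additional maximal cone of $\Gamma_0$; this is a codimension-$\geq 1$ condition on $v$, handled either by choosing a different admissible splitting or by checking directly that the extra overlap with $C$ is again a proper subcone.

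Once $\Delta$ is supported on $\Lambda$ it lies in $\op{A}^0(\Lambda)$, and $Z_\Delta = Z_\Sigma - Z_{\Sigma'} = 0$, so Lemma~\ref{injcurvlem} gives $\Delta = 0$, that is, $\Sigma = \Sigma'$. The main obstacle is the construction of the point $x$: one must verify, from the explicit fan $\Gamma_0$ (Figure~\ref{fig:gamma0}) and the Minkowski-sum multiplicity formula, that the three-dimensional intersections $Z_{\op{pos}(w)} \cap C$ are restricted enough that finitely many of them cannot cover $\op{relint}(C)$; the rest is bookkeeping with lattice indices.
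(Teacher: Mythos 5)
Your proposal is correct in outline and shares the skeleton of the paper's proof: reduce via the linearity of $\op{Z}'$ to a single balanced fan $\Delta=\Sigma-\Sigma'$ with $Z_\Delta=0$, kill the rays of $\Delta$ outside $\Lambda$ one at a time by isolating a full-dimensional piece of $C=\varphi(\op{pos}(v))+\op{pos}(e_{ij},e_{kl})$ near the ray $\varphi(\op{pos}(v))$, and finish with Lemma~\ref{injcurvlem}. Where you genuinely diverge is in how the isolation step is executed. The paper argues geometrically: it reinterprets $\varphi(\rho)+\Gamma_0$ as the locus of tropical lines meeting $\rho$, shows that the union $\delta$ of the lines $\Lambda_{ab}$ parametrized by $C$ is contained in $(v+H_{ij})\cup(v+H_{kl})$, deduces that each other ray meets $\delta$ in at most two points, and then bounds the dimension of the corresponding loci in the triangle $P_{\alpha\beta}$, treating separately the points on the central line $v+\op{span}(e_i+e_j)$. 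You instead classify directly which summands $\varphi(\op{pos}(w))+\tau$ can meet $C$ in dimension three, using $\op{span}(\tau)\subseteq V_C$ and $\varphi^{-1}(V_C)=\op{span}(v,e_i+e_j)$. Your ``dangerous directions'' $w=\lambda v+\mu(e_i+e_j)$ with $\lambda>0$ are exactly the rays that the paper's argument detects as meeting that central line, and your half-space description $\beta\geq\alpha\mu/\lambda$ (resp.\ $\gamma\geq-\alpha\mu/\lambda$) of the overlap matches the paper's condition $a\geq c$ (resp.\ $b\geq c$). Your version has the advantage of only tracking three-dimensional overlaps, which is all that matters for comparing multiplicities at a generic point, whereas the paper controls all overlaps set-theoretically.

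The one step you leave unverified is the enumeration of the cones $\tau$ of $\Gamma_0$ with $\op{span}(\tau)\subseteq V_C$. Besides the rays $-f_m$ that you flag, you must also rule out the rays $e_{pq}$ with $\{p,q\}\notin\{\{i,j\},\{k,l\}\}$; otherwise one of the other two matching cones, or a mixed cone, could contribute a three-dimensional overlap. Note that your first fallback, choosing a different admissible splitting, is not safe: Lemma~\ref{lembadlocus} only guarantees that \emph{some} splitting satisfies $v\notin H_{ij}\cup H_{kl}$, not that you can switch to another one. Your second fallback does work, and in fact more is true: a short computation (the analogue of the paper's rank-five check on the matrix with rows $e_{ij}$, $e_{kl}$, $\varphi(v)$, $-f_m$, $(1,\ldots,1)$) shows that $v\notin H_{ij}\cup H_{kl}$ already forces every ray generator of $\Gamma_0$ other than $e_{ij}$ and $e_{kl}$ to lie outside $V_C$, so the degenerate situation never occurs. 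With that verification supplied, your argument is complete.
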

\begin{proof}

	We may assume that $\Sigma$ and $\Sigma'$ have the same set of rays $\{\rho_1, \ldots, \rho_m\}$ by allowing multiplicities to be $0$. For any $i$ we denote by $m_i$ the multiplicity of $\rho_i$ in $\Sigma$ and by $m'_i$ the multiplicity of $\rho_i$ in $\Sigma'$. We need to prove $m_i=m_i'$ for any $i$.

		Let $\rho_i=\op{pos}( v )$ be a ray of $\Sigma$ and $\Sigma'$. We need to show that $\rho_i$ has the same multiplicity in $\Sigma$ and $\Sigma'$. It will be enough to prove it in the case when $v\notin \Lambda$. Suppose that $\Sigma$ and $\Sigma'$ have the same multiplicities on all rays not contained in $\Lambda$, and consider the fan $\Sigma-\Sigma'$: this is the balanced weighted fan $\{\rho_1, \ldots, \rho_m\}$ where the ray $\rho_i$ has multiplicity $m_i-m_i'$. All the rays not contained in $\Lambda$ have multiplicity $0$ in $\Sigma-\Sigma'$, so that $\Sigma-\Sigma'$ is an element of $\op{A}^0(\Lambda)$. In particular $\Sigma=\Sigma'$ now follows from Lemma~\ref{injcurvlem}. Thus we can assume $v\notin \Lambda$.
		
		We claim that it will be enough to find a cone $\sigma\in\Gamma_0$ and a full dimensional subcone $\tau \subset \varphi(\rho_i) + \sigma$, such that for any ray $\rho \in \Sigma$ and for any cone $\sigma'\in\Gamma_0$ with $(\rho, \sigma') \neq (\rho_i, \sigma)$, the cone $\varphi(\rho) +\sigma'$ does not intersect $\tau$ in full dimension. Given such $\sigma$ and such $\tau$, we could find a full dimensional subcone $\tau'$ of $\tau$ that is, for some choice of fan structure, a cone of $Z_\Sigma$ and, moreover, does not intersect any other $\varphi(\rho) +\sigma'$. The multiplicity of this $\tau'$ in $Z_{\Sigma}$ would be computed by Formula  \eqref{eq:minkw} as $m_i[N_{\rho_i + \sigma_0} : N_{\rho_i} + N_{\sigma_0}]$. As  $Z_{\Sigma}=Z_{\Sigma'}$ this number would equal $m'_i[N_{\rho_i + \sigma_0} : N_{\rho_i} + N_{\sigma_0}]$, giving $m_i=m_i'$ as desired.

 We assumed that $v\notin \Lambda$ and $\Lambda$ equals, by Lemma~\ref{lembadlocus}, $(H_{01}  \cup H_{23}) \cap (H_{02}  \cup H_{13}) \cap (H_{03}  \cup H_{12})$. Without loss of generality we may assume $v\notin H_{01}  \cup H_{23}$. 
	Let $\sigma$ be the cone of $\Gamma_0$ with rays generated by $e_{01}$ and $e_{23}$, and let $\tau_{ab}=\op{pos}(\varphi(v), \varphi(v)+ae_{01}, \varphi(v)+be_{23})$, for $a,b>0$. The cone $\tau_{ab}$ is a full dimensional subcone of $\varphi(\rho_i)+\sigma = \op{pos}(\varphi(v), e_{01}, e_{23})$. We claim that, for some positive $\alpha$ and $\beta$, the cones $\sigma$ and $\tau_{\alpha\beta}$ have the desired property. In other words, we need to prove that, for $\sigma'\in\Gamma_0$, $\rho\in \Sigma$, the dimension of $\tau_{\alpha\beta} \cap (\varphi(\rho)+\sigma')$ is less than $3=\dim \tau_{\alpha\beta}$, whenever $(\sigma, \rho)\neq (\sigma', \rho_i)$.
	
	We first prove that, for $\sigma'\in \Gamma_0$ with $\sigma \neq \sigma'$, the dimension of $(\varphi(\rho_i)+\sigma) \cap (\varphi(\rho_i)+\sigma')$ is at most two and so, a fortiori, so is the dimension of $\tau_{\alpha\beta} \cap (\varphi(\rho_i)+\sigma')$ for any $\alpha$ and $\beta$. A necessary condition for $\dim ((\varphi(\rho_i)+\sigma) \cap (\varphi(\rho_i)+\sigma')) = 3$ is that $\op{span}(\varphi(\rho_i)+\sigma) = \op{span} (\varphi(\rho_i)+\sigma')$, which implies that $\op{span}(\varphi(\mathbb{R}^4/\mathbb{R})+\sigma) = \op{span} (\varphi(\mathbb{R}^4/\mathbb{R})+\sigma')$. The fan $\Gamma_0$ is depicted in Figure~\ref{fig:gamma0}, and we can check that this last condition is only satisfied by four cones: $\sigma_0=\op{pos}(e_{01},-f_2)$, $\sigma_1=\op{pos}(e_{01},-f_3)$, $\sigma_2=\op{pos}(e_{23},-f_0)$ and $\sigma_3=\op{pos}(e_{23},-f_1)$. Let us now consider the cone $\sigma_0$. It is enough to show that $\op{span}(\varphi(\rho_i)+\sigma_0) \neq \op{span}(\varphi(\rho_i)+\sigma)$, and equivalently we can show that the dimension of $\op{span}(\varphi(\rho_i)+\sigma_0) + \op{span}(\varphi(\rho_i)+\sigma)= \op{span}(\varphi(v), e_{01}, e_{23}, -f_2)$ is at least four. A simple computation in $\mathbb{R}^6$ is now enough. The matrix with rows $e_{01}$, $e_{23}$, $\varphi(v_0,v_1, v_2, v_3)$,  $-f_2$, $(1,1,1,1,1,1)$ has rank less than five if and only if $v_0 - v_1= 0$, which is false, as we assumed $v\notin H_{01}$. Similar computations work for the cones $\sigma_1$, $\sigma_2$, $\sigma_3$.
		
	We are left to prove that, for some $\alpha$ and $\beta$, $\tau_{\alpha\beta}$ intersects $\varphi(\rho)+\sigma'$ in dimension at most two for any $\rho_i\neq \rho\in \Sigma$ and any $\sigma\in\Gamma_0$. Equivalently we have to show that, for any $\rho_i\neq \rho\in \Sigma$, the fan $\tau_{\alpha \beta} \cap (\varphi(\rho) + \Gamma_0)$ has no cone of dimension more than two. 
	
	Let $P_{\alpha\beta}$ be the triangle with vertices $\varphi(v), \varphi(v)+\alpha e_{01}, \varphi(v)+\beta e_{23}$. As $\tau_{\alpha \beta}$ is the cone over $P_{\alpha\beta}$, it will be enough to show that every $\varphi(\rho) + \Gamma_0$ intersects $P_{\alpha\beta}$ in dimension at most one. 
	
	By Equation \eqref{seteq2} the fan $\varphi(\rho) + \Gamma_0$ parametrizes tropical lines intersecting the ray $\rho$. As a result a point $v+ae_{01}+be_{23}\in P_{\alpha\beta}$ is in $\varphi(\rho) + \Gamma_0$ if and only if the line $\Lambda_{ab}$ associated to it intersects $\rho$. We thus need to prove that, for some $\alpha$ and $\beta$, the set of points $\varphi(v)+ae_{01}+be_{23}$, with $0\leq a<\alpha$, $0\leq b<\beta$ and such that the line $\Lambda_{ab}$ intersect a ray $\rho$ of $\Sigma$ different from $\rho_i$, has dimension at most one.
	
	The line $\Lambda_{ab}$ corresponding to a point $\varphi(v)+ae_{01}+be_{23} \in \varphi(\rho_i)+\sigma$ is depicted in Figure~\ref{fig:lineab}. To see this just consider the two points $p_1=\varphi(v)+(a+1,a,0,0)$ and $p_2=\varphi(v)+(0,0,b+1,b)$ on it, fix a realization $L$ of $\Lambda_{ab}$ and take two points $x_1, x_2$ with valuation $p_1$ and $p_2$. The coordinate of $\Lambda_{ab}$ corresponding to the basis element $e_{ij}$ can now be computed as the valuation of $x_iy_j-x_jy_i$. This equals $\min\{(p_1)_i+(p_2)_j, (p_2)_i+(p_1)_j\}$ because $(p_1)_i+(p_2)_j \neq (p_2)_i+(p_1)_j$ for every $i,j$, and so one can compute that $\Lambda_{ab}$ corresponds to the point $\varphi(v)+ae_{01}+be_{23}$.
	
	We denote by $\delta$ the union of all the tropicalized lines $\Lambda_{ab}$ for $a,b\geq 0$, $\delta$ is the union of two cones pointed at $v$, $\delta= (v + \op{pos}( e_0, e_1 )) \cup (v + \op{pos}( e_2, e_3 ))$. We have that $\delta$ is contained in the union of the affine planes $(v+ H_{01}) \cup (v + H_{23})$. 
	As we assumed that $v\notin H_{01}  \cup H_{23}$ the two affine planes $v+ H_{01}$ and $v+H_{23}$ do not pass through the origin. As a result any ray $\rho_j$ intersects $\delta$ in at most two points because the line it spans intersects the two affine planes $v+H_{01}$ and $v+H_{23}$ that contain $\delta$ in at most one point each.
	
	This gives us a finite set $S \subset \delta$ made of the intersection points of all the rays $\rho \neq \rho_i$ with $\delta$. A line $\Lambda_{ab}$ intersects some ray $\rho$ if and only if it contains a point $p\in S$, and we just have to prove that, for each $p\in S$, the set of points $\varphi(v)+ae_{01}+be_{23}$, with $0\leq a<\alpha$, $0\leq b<\beta$ and such that the line $\Lambda_{ab}$ contains $p$, has dimension at most one.
	
	Let $p\in \delta$ be a point not contained in the line $v+\op{span}(1,1,0,0)$. We can assume, without loss of generality, that $p \in v+ H_{01}$ so that $p= v+c_0e_0+c_1e_1$ for some $c_0, c_1 \geq 0$. A tropical line $\Lambda_{ab}$ contains $p$ if and only if $a= \min\{c_0, c_1\}$. It follows that the set of points $\varphi(v)+ae_{01}+be_{23}$, with $0\leq a< \alpha$ and $0\leq b<\beta$ and such that the line $\Lambda_{ab}$ contains the point $p$, is the line segment $\{\varphi(v)+\min\{c_0,c_1\}e_{01}+be_{23} \mid 0\leq b < \beta\}$.
	
	It remains to prove that the points $p \in S$ contained in the line $v+\op{span}(1,1,0,0)$ are only contained in finitely many tropicalized lines $\Lambda_{ab}$ with $a\leq \alpha$ and $b\leq \beta$ for some $\alpha$ and $\beta$. We can finally define $\alpha$ and $\beta$: $\alpha$ is the minimum positive $t$ such that there is a point $p\in S$ of the form $p= v - t (1,1,0,0)$, and $\beta$ the minimum positive $t$ such that there is a point $p\in l$ of the form $p=v + t (1,1,0,0)$. By definition no line $\Lambda_{ab}$ with $a<\alpha$ and $b < \beta$  contain any point $p \in S$ contained in $v+\op{span}(1,1,0,0)$, and this concludes the proof.
\end{proof}

\begin{remark}\label{os:setinj}
The proof of Theorem~\ref{curveinj} also shows that if $Z_{\Sigma}$ and $Z_{\Sigma'}$ have the same support set, then the support set of $\Sigma$ and $\Sigma'$ is the same outside $\Lambda$. This is consistent with Example~\ref{eg:contro}.
\end{remark}

\section{Tropicalization of families} 
We now give an application of tropical Chow hypersurfaces to the study of tropicalization of families of algebraic varieties. In this section we will work with trivial valuation.

Consider a family of projective algebraic curves $\mathcal{X}\subset \mathbb{A}^k\times\mathbb{P}^3$ and denote, for $a\in \mathbb{A}^k$, its fiber by $X_a$. 
We will show how to use the theory of tropical Chow hypersurfaces to answer the following question.
\begin{question}\label{realqu}
What are all the possible tropical varieties $\Trop(X_a)$ as $a$ varies in $\mathbb{A}^k$?
\end{question}

We can associate to $\mathcal{X}$ a lift of the Chow form ${\op{ch}}_\mathcal{X}\in K[c_1, \ldots, c_k, p_I \mid I\in \binom{[4]}{2}]$. This form can be computed, for example, with the algorithm described in \cite[3.1]{DalStur}. 
 It has the property that, for any $a\in \mathbb{A}^k$ such that $X_a$ is a curve, a lift of the Chow form $\op{ch}_{X_a}$ can be obtained by substituting the $c_i$'s with the $a_i$'s in ${\op{ch}}_\mathcal{X}$. By a slight abuse of notation we will again denote this lift as $\op{ch}_{X_a}$.

\begin{proposition}\label{realprop}
Let $\mathcal{X}\subset \mathbb{A}^k\times\mathbb{P}^3$ be a family of projective algebraic curves and let $a,b \in \mathbb{A}^k$. Suppose that $\mathcal{N}^1(\op{Newt}(\op{ch}_{X_a}))$ and $\mathcal{N}^1(\op{Newt}(\op{ch}_{X_b}))$ intersect transversely the tropical Grassmannian $\op{TrGr}(1,3)$. Then  $\Trop(X_a) = \Trop(X_b)$ if and only if $\mathcal{N}^1(\op{Newt}(\op{ch}_{X_a})) \cap \op{TrGr}(1,3) = \mathcal{N}^1(\op{Newt}(\op{ch}_{X_b})) \cap \op{TrGr}(1,3)$. In particular if $\op{Newt}(\op{ch}_{X_a}) = \op{Newt}(\op{ch}_{X_b})$ then $\Trop(X_a) = \Trop(X_b)$.
\end{proposition}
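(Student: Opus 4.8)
The plan is to establish a chain of equivalences that passes through the tropical Chow hypersurfaces $\Trop(Z_{X_a})$ and $\Trop(Z_{X_b})$, using Theorem~\ref{troptochmul} to tie these to $\Trop(X_a)$ and $\Trop(X_b)$, Theorem~\ref{curveinj} to turn an equality of tropical Chow hypersurfaces back into an equality of curves, and the transversality hypothesis to identify $\Trop(Z_{X_a})$ with $\mathcal{N}^1(\op{Newt}(\op{ch}_{X_a}))\cap\op{TrGr}(1,3)$. Throughout I assume, as in the set-up of the section, that $a$ and $b$ are such that $X_a$ and $X_b$ are genuine curves meeting the torus $\T^3$ with no component in a coordinate hyperplane, so that Theorem~\ref{troptochmul} applies, and I work with \emph{weighted} tropical cycles, checking that multiplicities propagate at each step.

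First I would translate the Chow-form data into tropical geometry. Since the valuation on $K$ is trivial, for a nonzero polynomial $g$ in the Pl\"ucker variables the tropical hypersurface $\Trop(\V(g))\subset\mathbb{R}^6/\mathbb{R}$ is exactly the codimension-one skeleton $\mathcal{N}^1(\op{Newt}(g))$ of the inner normal fan of $\op{Newt}(g)$, with each maximal cone weighted by the lattice length of the corresponding edge of $\op{Newt}(g)$; I would apply this with $g=\op{ch}_{X_a}$. As $Z_{X_a}$ is a proper hypersurface of $\op{Gr}(1,3)$, the form $\op{ch}_{X_a}$ does not vanish on the Grassmannian, so $Z_{X_a}^\circ=\V(\op{ch}_{X_a})\cap\op{Gr}^\circ(1,3)$; the transversality hypothesis says precisely that $\mathcal{N}^1(\op{Newt}(\op{ch}_{X_a}))$ meets $\op{TrGr}(1,3)$ transversely, and since tropicalization commutes with transverse intersection (see \cite[Chapter 3]{Mac}), with stable-intersection multiplicities which in the transverse case are products of the two incident weights, I would conclude
\[
\Trop(Z_{X_a})=\mathcal{N}^1(\op{Newt}(\op{ch}_{X_a}))\cap\op{TrGr}(1,3)
\]
as weighted tropical cycles, and likewise for $b$.

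On the tropical-curve side, Theorem~\ref{troptochmul} gives $\Trop(Z_{X_a})=\varphi(\Trop(X_a))+\Gamma_0=Z_{\Trop(X_a)}$ and $\Trop(Z_{X_b})=Z_{\Trop(X_b)}$; and because the valuation is trivial, $\Trop(X_a)$ and $\Trop(X_b)$ are tropical curves in $\mathbb{R}^4/\mathbb{R}$ whose support is a fan, so Theorem~\ref{curveinj} applies and gives $Z_{\Trop(X_a)}=Z_{\Trop(X_b)}$ if and only if $\Trop(X_a)=\Trop(X_b)$. Chaining the three equivalences,
\begin{align*}
\Trop(X_a)=\Trop(X_b)
&\iff Z_{\Trop(X_a)}=Z_{\Trop(X_b)}\\
&\iff \Trop(Z_{X_a})=\Trop(Z_{X_b})\\
&\iff \mathcal{N}^1(\op{Newt}(\op{ch}_{X_a}))\cap\op{TrGr}(1,3)=\mathcal{N}^1(\op{Newt}(\op{ch}_{X_b}))\cap\op{TrGr}(1,3),
\end{align*}
which is the asserted equivalence. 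For the final sentence, if $\op{Newt}(\op{ch}_{X_a})=\op{Newt}(\op{ch}_{X_b})$ then the two Newton polytopes have the same inner normal fan, hence the same codimension-one skeleton, hence the same intersection with $\op{TrGr}(1,3)$, and the equivalence just proved yields $\Trop(X_a)=\Trop(X_b)$.

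I expect the main obstacle to be the identification $\Trop(Z_{X_a})=\mathcal{N}^1(\op{Newt}(\op{ch}_{X_a}))\cap\op{TrGr}(1,3)$: one must fix precisely what ``transverse'' means (transversality of the underlying fans at every point of their intersection, so that the intersection has the expected codimension one in $\op{TrGr}(1,3)$ and no tropical cancellation occurs), invoke the statement that transverse tropical intersections compute the tropicalization of the scheme-theoretic intersection, and verify that the stable-intersection weights agree with the edge lattice lengths inherited from $\op{Newt}(\op{ch}_{X_a})$. A smaller point to check is that the hypotheses of Theorem~\ref{troptochmul} genuinely hold for the fibres $X_a$ and $X_b$ under consideration, i.e. that one has restricted to parameters for which these fibres are curves meeting $\T^3$ with no component in a coordinate hyperplane.
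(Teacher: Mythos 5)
Your proof is correct and follows essentially the same route as the paper: use the trivial valuation and the transversality hypothesis to identify $\Trop(Z_{X_a})$ with $\mathcal{N}^1(\op{Newt}(\op{ch}_{X_a}))\cap\op{TrGr}(1,3)$, then pass through $\Trop(Z_{X_a})=Z_{\Trop(X_a)}$ (Theorem~\ref{troptochmul}) and conclude by the injectivity result for space curves (Theorem~\ref{curveinj}). You simply make explicit the intermediate steps and the multiplicity bookkeeping that the paper's shorter proof leaves implicit.
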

\begin{proof}
The Chow hypersurface $Z_{X_a}$ is the intersection of $\V(\op{ch}_{X_a})$ with the Grassmannian $\op{Gr}(1,3)$. The tropicalization of $\V(\op{ch}_{X_a})$ is the codimension-one skeleton $\mathcal{N}^1(\op{Newt}(\op{ch}_{X_a}))$ of the normal fan of the Newton polytope of $\op{ch}_{X_a}$. As this intersection is transverse, we have that $\Trop(Z_{X_a}) = \op{TrGr}(1,3) \cap \Trop(\V(\op{ch}_{X_a}))$. The same argument shows that $\Trop(Z_{X_b}) = \op{TrGr}(1,3) \cap \Trop(\V(\op{ch}_{X_b}))$. The statement now follows from Theorem~\ref{curveinj}.
\end{proof}

\begin{remark}
The assumption on the dimension of the varieties $X_a$ is only due to the same assumption being made in Theorem~\ref{curveinj}. A proof of Conjecture~\ref{coinj} would automatically extend Proposition~\ref{realprop} to families of projective varieties of arbitrary dimension.
\end{remark}

Consider the stratification of $\mathbb{A}^k$ defined by the coefficients of the lift of the Chow form ${\op{ch}}_\mathcal{X}\in K[c_1, \ldots, c_k, p_I \mid I\in \binom{[4]}{2}]$. This stratification has as closed strata the vanishing loci of subsets of coefficients of ${\op{ch}}_\mathcal{X}$. Two points $a,b$ in the same stratum have the same Newton polytope $\op{Newt}(\op{ch}_{X_a}) = \op{Newt}(\op{ch}_{X_b})$ and hence, if the transversality condition of Proposition~\ref{realprop} holds, we also have $\Trop(X_a) = \Trop(X_b)$. The transversality condition holds in many cases, making this argument an useful tool to approach Question~\ref{realqu}. This is shown in the following example.

\begin{example}
Let $\tilde{I}\subset K[c_1,c_2][x,y,z,t]$ be the ideal generated by the polynomials
\begin{equation*} 
	f=x^2+y^2+zt,\ g=c_1z^2+c_2zt+xy+t^2.
\end{equation*}
The ideal $\tilde{I}$ defines a family of quartic curves in $\mathbb{P}^3$ parametrized by $\mathbb{A}^2$. The Chow form of $\tilde {I}$ is a polynomial of degree four in the Pl\"ucker coordinates with $47$ monomials:
\tiny  
\begin{multline*}
{\op{ch}}_{\mathcal{X}}=-c_1p_{01}^4+c_2p_{01}^2p_{02}^2-p_{02}^4+p_{01}^2p_{02}p_{12}+c_2p_{01}^2p_{1,2}^2-2p_{02}^2p_{1,2}^2-p_{1,2}^4 -4c_1p_{01}^2p_{02}p_{03}+2c_2p_{02}^3p_{03}\\
+c_1c_2p_{01}^2p_{03}^2+(-c_2^2-2c_1)p_{02}^2p_{03}^2+2c_1c_2p_{02}p_{03}^3-c_1^2p_{03}^4+2p_{02}^3p_{13}-4c_1p_{01}^2p_{1,2}p_{13}+4c_2p_{02}^2p_{1,2}p_{13}\\
+2p_{02}p_{1,2}^2p_{13}+2c_2p_{1,2}^3p_{13}+c_1p_{01}^2p_{03}p_{13}-2c_2p_{02}^2p_{03}p_{13}+2c_1p_{02}p_{03}^2p_{13}+c_1c_2p_{01}^2p_{13}^2+(-2c_2^2-4c_1-1)p_{02}^2p_{13}^2\\
-2c_2p_{02}p_{1,2}p_{13}^2+(-c_2^2-2c_1)p_{1,2}^2p_{13}^2+4c_1c_2p_{02}p_{03}p_{13}^2-2c_1^2p_{03}^2p_{13}^2+2c_1p_{02}p_{13}^3+2c_1c_2p_{1,2}p_{13}^3-c_1^2p_{13}^4\\
-3p_{01}p_{02}^2p_{23}-2c_2p_{01}p_{02}p_{1,2}p_{23}+p_{01}p_{1,2}^2p_{23}+c_2p_{01}p_{02}p_{03}p_{23}+c_1p_{01}p_{03}^2p_{23}+(2c_2^2+4c_1+1)p_{01}p_{02}p_{13}p_{23}\\
+c_2p_{01}p_{1,2}p_{13}p_{23}-2c_1c_2p_{01}p_{03}p_{13}p_{23}-3c_1p_{01}p_{13}^2p_{23}+(-c_2^2+2c_1)p_{01}^2p_{23}^2-4p_{02}p_{1,2}p_{23}^2+p_{02}p_{03}p_{23}^2\\
+4c_2p_{02}p_{13}p_{23}^2+p_{1,2}p_{13}p_{23}^2-4c_1p_{03}p_{13}p_{23}^2-2c_2p_{01}p_{23}^3-p_{23}^4.
\end{multline*}
\normalsize
The coefficients of ${\op{ch}}_{\mathcal{X}}$ are products of the following polynomials:
\begin{equation*}
c_1,\ c_2,\ c_2^2-2c_1,\ c_2^2+2c_1,\ 2c_2^2+4c_1+1.
\end{equation*}
This defines a stratification of $\mathbb{A}^2$ in seven strata: in each stratum the Newton polytope of the Chow form is constant. The closure of these strata are:
\begin{equation*}
\begin{array}{c}
\V_0=\mathbb{A}^2, V_1=\V(c_1),\ V_2=\V(c_2),\ V_3=\V(c_2^2-2c_1),\ V_4=\V(c_2^2+2c_1),\\ 
V_5=\V(2c_2^2+4c_1+1), \ V_{6}=\V(c_1, c_2),\ V_{7}=\V(c_1, 2c_2^2+4c_1+1 ),\\
 V_{8}=\V(c_2, 2c_2^2+4c_1+1),\ V_{9}=\V(c_2^2-2c_1,2c_2^2+4c_1+1),\ V_{10}=\emptyset.
\end{array}
\end{equation*}
For the strata $V_0, V_2,V_3,V_4, V_5,V_{7},V_{8}, V_{9},V_{10}$ the corresponding Newton polytope of ${\op{ch}}_{X_a}$ defines a tropical hypersurfaces in $\mathbb{R}^6/\mathbb{R}$ that is transverse to the Grassmannian $\op{TrGr}(1,3)$. As a result, by Proposition~\ref{realprop}, the tropicalization of $\V(\tilde{I})$ is constant within these strata.
The tropicalization within the stratum $V_1$ is constant too, as, for $c_1=0$ and $c_2\neq 0$ the tropical hypersurfaces $\Trop(\V(f))$ and $\Trop(\V(g))$ intersect transversely, and thus determine the tropical curve $\Trop(\V(f,g))$. Finally the tropicalization is trivially constant within the stratum $V_{6}=\{(0,0)\}$.

As a result these ten strata correspond to ten (non-empty) potentially different tropical varieties 
\begin{equation*}
\Sigma_0,\ \Sigma_1,\ \Sigma_2,\ \Sigma_3,\ \Sigma_4,\ \Sigma_5,\ \Sigma_{6},\ \Sigma_{7},\ \Sigma_{8},\ \Sigma_{9}.
\end{equation*}
The tropical varieties can be computed, for example with \texttt{gfan} (\cite{gfan}), as $\Trop(X_a)$ for arbitrary parameters $a$ in the correct stratum. We have that
\begin{multline*}
\Sigma_0=\{\op{pos}(1,0,0,0),\  \op{pos}(0,1,0,0),\ \op{pos}(0,0,1,0),\ \op{pos}(0,0,0,1)\},\\ \text{ with multiplicities } \{4,4,4,4\}, 
\end{multline*}
\begin{multline*}
\Sigma_1=\{\op{pos}(1,0,0,0),\  \op{pos}(0,1,0,0),\ \op{pos}(0,0,-1,1),\ \op{pos}(0,0,1,0)\},\\ \text{ with multiplicities } \{2,2,2,4\},
\end{multline*}
\begin{multline*}
\Sigma_{6}=\{\op{pos}(3,-1,-3,1),\ \op{pos}(0,0,1,0),\ \op{pos}(-1,3,-3,1)\},\\ \text{ with multiplicities } \{1,4,1\}. 
\end{multline*} 
Moreover 
\begin{equation*}
\Sigma_0 = \Sigma_2 = \Sigma_3 = \Sigma_4 = \Sigma_5 = \Sigma_{8} = \Sigma_{9} \text{ and }\ \Sigma_1=\Sigma_{7},
\end{equation*} 
so that there are actually only three possible tropical varieties arising as $\Trop(X_a)$ for some $a\in \mathbb{C}^2$.
All those identifications, with the exception of $\Sigma_1 = \Sigma_{7}$, are already visible looking at the Newton polytope of ${\op{ch}}_{X_a}$. This happens because each strata correspond to a different set of exponents of the specialized polynomial ${\op{ch}}_{X_a}$, but to a same Newton polytope as they have the same convex hull. 
\end{example}

\addcontentsline{toc}{section}{References}
\bibliographystyle{plain}
\bibliography{rrbib}

\end{document}